\DeclareMathOperator{\lcm}{lcm}
\numberwithin{equation}{section}
\numberwithin{subsection}{section}
\newtheorem*{namedtheorem}{\theoremname}
\newcommand{\theoremname}{testing}
\newtheorem{theorem}{Theorem}[section]
\newtheorem{proposition}[theorem]{Proposition}
\newtheorem{proposition-definition}[theorem]
{Proposition-Definition}
\newtheorem{corollary}[theorem]{Corollary}
\newtheorem{lemma}[theorem]{Lemma}
\newtheorem{conjecture}[theorem]{Conjecture}
\theoremstyle{definition}
\newtheorem{definition}[theorem]{Definition}
\newtheorem{remark}[theorem]{Remark}
\renewcommand{\thesubsubsection}{\ifnum\value{subsection}=0
	\arabic{section}.\arabic{subsubsection}%
\else
	\arabic{section}.\arabic{subsection}.\arabic{subsubsection}%
\fi}
\let\c@equation\c@subsubsection
\let\subsection@old\subsection
\def\subsection#1{\ifnum\value{subsubsection}>0 \ifnum\value{subsection}=0
	\setcounter{subsection}{\value{subsubsection}}%
\fi \fi
\subsection@old{#1}}
\theoremstyle{remark}
\newcommand{\tX}{\widetilde{X}}
\newcommand{\sD}{\mathscr{D}}
\newcommand\cA{\mathcal{A}}
\newcommand\cM{\mathcal{M}}
\newcommand\cO{\mathcal{O}}
\newcommand\cR{\mathcal{R}}
\newcommand{\oX}{{\overline{X}}}
\newcommand{\ocA}{{\overline{\mathcal{A}}}}
\newcommand{\tcA}{{\widetilde{\mathcal{A}}}}
\newcommand{\otcA}{{\overline{\widetilde{\mathcal{A}}}}}
\newcommand{\ocM}{{\overline{\mathcal{M}}}}
\renewcommand\AA{\mathbb{A}}
\newcommand\CC{\mathbb{C}}
\newcommand\GG{\mathbb{G}}
\newcommand\NN{\mathbb{N}}
\newcommand\QQ{\mathbb{Q}}
\newcommand\RR{\mathbb{R}}
\newcommand\ZZ{\mathbb{Z}}
\newcommand\Spec{\operatorname{Spec}}
\newcommand\Spf{\operatorname{Spf}}
\begin{document}

\title[Level structures and Lang's conjecture]{Level structures on abelian varieties, \\
Kodaira dimensions, and Lang's conjecture}

\author[D. Abramovich]{Dan Abramovich}
\address{Department of Mathematics, Box 1917, Brown University, Providence, RI, 02912, U.S.A}
\email{abrmovic@math.brown.edu}

\author[A. V\'arilly-Alvarado]{Anthony V\'arilly-Alvarado}
\address{Department of Mathematics MS 136, Rice University, 6100 S.\ Main St., Houston, TX 77005, USA}
\email{av15@rice.edu}

\subjclass[2010]{Primary 14K10, 14K15; Secondary 11G18}

\thanks{Research by D. A. partially supported by NSF grant DMS-1500525. Research by A. V.-A. partially supported by NSF CAREER grant DMS-1352291. This paper began as a lunch conversation during the workshop ``Explicit methods for modularity of K3 surfaces and other higher weight motives'', held at ICERM in October, 2015. We thank the organizers of the workshop and the staff at ICERM for creating the conditions that sparked this project. We also thank {\sc Y. Brunebarbe, F. Campana, G. Farkas, B. Hassett, K. Hulek, J. Koll\'ar, R. Lazarsfeld,  B. Mazur, M. Popa, C. Schnell, J. Silverman, E. Ullmo, D. Ulmer, J. Voight, D. Zureick-Brown}, and the anonymous referees, who gave insightful comments and suggested numerous references.}

\date{\today}

\begin{abstract}
Assuming Lang's conjecture, we prove that for a prime $p$, number field $K$, and positive integer $g$, there is an integer $r$ such that no principally polarized abelian variety $A/K$ has full level-$p^r$ structure. To this end, we use a result of Zuo to prove that for each closed subvariety $X$ in the moduli space $\cA_g$ of principally polarized abelian varieties of dimension $g$, there exists a level $m_X$ such that the irreducible components of the preimage of $X$ in $\cA_g^{[m]}$ are of general type for $m > m_X$. 
\end{abstract}

\maketitle

\setcounter{tocdepth}{1}
\tableofcontents


\section{Introduction}


\subsection{Main result: arithmetic}

	{\sc Yuri Manin} proved in \cite{Manin} that, given a number field $K$ and a prime $p$,  the order  of $p$-primary torsion points across \emph{all} elliptic curves over $K$ is bounded. Our main arithmetic result is an analogous statement for higher dimensional abelian varieties, conditional on {\sc Lang}'s conjecture (\cite[Conjecture 5.7]{Lang}, see Conjecture~\ref{conj:Lang} below). Instead of $p$-primary torsion, we treat the more tractable case of full level structures: a {\em full level-$m$ structure} on an abelian variety $A$ of dimension $g$ is an isomorphism of group schemes on the $m$-torsion subgroup
	\[
		A[m] \,\xrightarrow{\ \sim\ }\, (\ZZ/m\ZZ)^g \times (\mu_{m})^g.
	\]
We do not require this isomorphism to be compatible with the Weil pairing.

	\begin{theorem}[Uniform power bound]\label{Th:main}
		Assume that {\sc Lang}'s conjecture holds. Fix an integer $g$, a prime number $p$, and a number field $K$. Then there is an integer $r$ such that no principally polarized abelian variety $A/K$ of dimension $g$ has full level-$p^r$ structure.
	\end{theorem}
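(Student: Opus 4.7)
The plan is to run Noetherian induction on closed subvarieties of $\cA_g$, combining the general-type conclusion stated in the abstract (built on Zuo's theorem) with Lang's conjecture, which implies that the $K$-rational points of a positive-dimensional projective variety of general type over a number field are contained in a proper Zariski-closed subvariety.

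A principally polarized abelian variety $A/K$ with full level-$p^r$ structure determines a $K$-point of the coarse moduli space $\cA_g^{[p^r]}$, and the forgetful map $\cA_g^{[p^{r'}]} \to \cA_g^{[p^r]}$ for $r' \geq r$ sends $K$-points to $K$-points. Let $Z_r \subseteq \cA_g$ denote the Zariski closure of the image of $\cA_g^{[p^r]}(K)$ in $\cA_g$; then $Z_0 \supseteq Z_1 \supseteq Z_2 \supseteq \cdots$ is a descending chain of closed subvarieties, which stabilizes at some $Z_\infty$ by Noetherianity. It suffices to show $Z_\infty = \emptyset$, since then $\cA_g^{[p^r]}(K) = \emptyset$ for all sufficiently large $r$, which is exactly the theorem.

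Suppose for contradiction that $Z_\infty \neq \emptyset$, and set $X = Z_\infty$. Apply the preimage theorem to $X$: there exists $m_X$ so that for every $m > m_X$ each irreducible component of the preimage $Y_m$ of $X$ in $\cA_g^{[m]}$ is of general type. Choose $r$ with $p^r > m_X$, and pass to a finite Galois extension $L/K$ over which every irreducible component of $Y_{p^r}$ is defined; each such component is a positive-dimensional projective variety of general type over $L$. Lang's conjecture places the $L$-rational points (and hence the $K$-rational points) of each component in a proper closed subvariety. Because the projection $\cA_g^{[p^r]} \to \cA_g$ is finite and each component of $Y_{p^r}$ surjects onto an irreducible component of $X$, the image in $\cA_g$ of these proper subvarieties is a proper closed subvariety of $X$. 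This image contains $Z_r$, forcing $Z_r \subsetneq Z_\infty$ and contradicting stability of the chain.

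The main obstacles I anticipate are two technical points. First, one must distinguish $K$-points of the coarse space $\cA_g^{[p^r]}$ from objects of the moduli problem over $K$; for $r$ large the level structure rigidifies the moduli problem, so this reduces to a standard check on automorphisms and fields of definition. Second, Lang's conjecture is typically phrased over the number field of definition of the variety of general type, so one must descend from $L$ back to $K$: the union of Galois translates of a proper closed subvariety of $Y_{p^r, L}$ containing its $L$-points is defined over $K$, and its image in $\cA_g$ remains a proper closed subvariety of $X$. Neither point is fundamental, but both require careful bookkeeping.
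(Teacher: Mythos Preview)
Your overall strategy---Noetherian induction on the Zariski closures $Z_r$ of the images of $\cA_g^{[p^r]}(K)$, combined with eventual hyperbolicity and Lang's conjecture---is exactly the paper's approach, and the two technical points you flag (coarse space versus moduli problem, and field of definition for Lang) are handled in the paper just as you anticipate. However, there is a genuine gap in your endgame.

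You assume for contradiction that $Z_\infty \neq \emptyset$ and then assert that each irreducible component of the preimage $Y_{p^r}$ is a \emph{positive-dimensional} variety of general type. But nothing rules out $\dim Z_\infty = 0$: if $Z_\infty$ is a finite set of points, the preimage components are zero-dimensional, and Lang's conjecture is vacuous there. Concretely, a single principally polarized abelian variety $A/K$ whose $p$-primary torsion $A[p^\infty](K)$ were infinite would give a point of $Z_r$ for every $r$, and your argument never excludes this. The paper separates this case out (Corollary~\ref{Cor:not-dense}\,(3)): once the descending chain stabilizes at a finite set, one invokes the Mordell--Weil theorem (or reduction modulo two primes of good reduction) to see that $A[p^\infty](K)$ is finite for each of the finitely many remaining abelian varieties, so $Z_r = \emptyset$ for $r \gg 0$. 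This is an unconditional, classical input that is logically independent of Lang's conjecture and of the general-type statement; without it the induction does not terminate.

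A minor side remark: the components of $Y_{p^r}$ are not projective, only quasi-projective, since $\cA_g^{[m]}$ is open in its compactification. Lang's conjecture as stated in the paper applies to varieties of general type without a projectivity hypothesis, so this is only a terminological slip in your write-up, not a mathematical problem.
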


See  \S\ref{Sec:arithmetic} for known results and variants of Theorem~\ref{Th:main}. The main ingredient in our proof is a powerful result of {\sc Zuo}~\cite{Zuo}\footnote{In an earlier version of this article, we used instead a recent result of {\sc Popa} and {\sc Schnell} \cite{Popa-Schnell} to get our main argument off the ground. Their result applies to arbitrary families of polarized varieties, not necessarily of Torelli type.}. The complex function field analogue of our result is shown unconditionally by {\sc Hwang} and {\sc To} in \cite[Theorem 1.3]{Hwang-To-uniform}. See also {\sc Rousseau} \cite{Rousseau} and {\sc Bakker-Tsimerman} \cite[Theorem A]{Bakker-Tsimerman-torsion}. 

	Theorem~\ref{Th:main} is a byproduct of our ongoing pursuit of analogous results for K3 surfaces. That investigation follows on unconditional results of {\sc V\'arilly-Alvarado}, with {\sc McKinnie, Sawon} and {\sc Tanimoto} in \cite{MSTV} and with {\sc Viray} in \cite{VAVBrauer}. However, Theorem~\ref{Th:main} is certainly closer in spirit to numerous unconditional results, of both geometric and arithmetic nature, of {\sc A. Cadoret} and  {\sc A. Tamagawa}, as well as {\sc Ellenberg, Hall}, and {\sc Kowalski}; see e.g. \cite{Cadoret-Tamagawa-uniform,Cadoret-Tamagawa-torsion}, \cite[Theorem~7]{EHK}, and especially {\sc Cadoret}'s conditional result \cite{Cadoret}.
	
	\begin{remark}
		{\sc Zarhin}'s trick~\cite{Zarhin} allows one to remove the hypothesis that the abelian varieties in Theorem~\ref{Th:main} be principally polarized. We leave the necessary modifications to the interested reader.
	\end{remark}

\begin{remark}
It can be shown that {\sc Vojta}'s conjecture~\cite[Conjecture~2.3]{VojtaABC} implies that for large $m$ no principally polarized abelian variety $A/K$ of dimension $g$ has full level-$m$ structure. We address this implication in a separate note \cite{AV-Campana-Vojta}.
	\end{remark}


\subsection{Main results: geometry} 

	We apply {\sc Lang}'s conjecture to certain varieties of general type lying within moduli spaces, in the style of landmark results \cite{CHM} of {\sc Caporaso, Harris} and {\sc Mazur}. We work with moduli spaces of abelian varieties with level structure.

	Denote by $\cA_g^{[m]}$ the moduli space of principally polarized abelian varieties of dimension $g$ with full level-$m$ structure. When $m=1$ write  $\cA_g:= \cA_g^{[1]}$, and let $\pi_m\colon \cA_g^{[m]} \to \cA_g$ be the natural morphism that ``forgets the level structure". The morphism $\pi_m$ is finite.

	The geometric result on which Theorem \ref{Th:main} relies is:
	
	\begin{theorem}[Eventual hyperbolicity]\label{Th:eventual-hyperbolicity}
		Let $X\subset \cA_g$ be a locally closed subvariety. There is an integer $m_X$ so that for all $m>m_X$, every irreducible component of  $\pi_m^{-1} X \subset \cA_g^{[m]}$ is of general type.
	\end{theorem}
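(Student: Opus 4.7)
We may replace $X$ by its Zariski closure and assume it is closed, irreducible, and positive-dimensional in $\cA_g$ (the case $\dim X = 0$ is trivial since $\pi_m$ is finite). Fix $m_0 \geq 3$. By Mumford--Faltings--Chai, for each $m$ divisible by $m_0$ there exist compatible smooth projective toroidal compactifications $\overline{\cA_g^{[m]}}$ with simple normal crossings boundary $\partial_m$ and finite covers $\pi_{m,m_0}\colon\overline{\cA_g^{[m]}}\to\overline{\cA_g^{[m_0]}}$ that are \'etale on the interior and ramified along the toroidal boundary with index $m/m_0$; in particular $\pi_{m,m_0}^{\ast}\partial_{m_0} = (m/m_0)\,\partial_m$. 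For an irreducible component $X'$ of $\pi_m^{-1}X$, let $f\colon \overline{X}'\to\overline{\cA_g^{[m]}}$ be a log resolution of the closure of $X'$, with reduced SNC boundary $D'\subset\overline{X}'$.

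\textbf{Step 1 (Log general type via Zuo).} Since $X$ has positive dimension in the period domain $\cA_g$, the pullback to $X'$ of the universal polarized VHS is non-isotrivial. Zuo's theorem \cite{Zuo} then produces an injection
\[ \cL \hookrightarrow \mathrm{Sym}^n \Omega^1_{\overline{X}'}(\log D') \]
for some $n\geq 1$ and big line bundle $\cL$. By Campana--P\u{a}un (``big subsheaf of log symmetric differentials implies log general type''), this yields that $K_{\overline{X}'}+D'$ is big, i.e. the pair $(\overline{X}',D')$ is of log general type.

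\textbf{Step 2 (Ramification absorbs the boundary).} Mumford's formula reads $K_{\overline{\cA_g^{[m]}}}+\partial_m = (g+1)\lambda_m$, where $\lambda_m$ is the Hodge line bundle. Restricting to $\overline{X}'$ and using the ramification identity, adjunction yields, modulo bounded discrepancy terms coming from the log resolution $f$,
\[ K_{\overline{X}'} \ \equiv\ (g+1)\lambda_m|_{\overline{X}'} \ -\ \partial_m|_{\overline{X}'} \ =\ (g+1)\lambda_m|_{\overline{X}'} \ -\ \tfrac{m_0}{m}\,f^{\ast}\pi_{m,m_0}^{\ast}\partial_{m_0}|_{\overline{X}'}. \]
By Step 1 the leading term $(g+1)\lambda_m|_{\overline{X}'}$ is big (it agrees with $K_{\overline{X}'}+D'$ up to bounded error). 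The subtracted divisor is effective with coefficient $m_0/m\to 0$, and both sides are pulled back from a fixed divisor on $\overline{\cA_g^{[m_0]}}$ restricted to the image of $X$. A numerical comparison shows that for $m\gg m_X$ the big leading term dominates the effective perturbation, so $K_{\overline{X}'}$ is big and $X'$ is of general type.

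\textbf{Main obstacle.} The heart of the argument is Step 2, specifically (a) showing that the adjunction/discrepancy terms from the log resolution are bounded uniformly in $m$ --- this follows from their dependence only on the fixed toroidal combinatorics of $\overline{\cA_g^{[m_0]}}$ --- and (b) making the numerical comparison between $(g+1)\lambda_m|_{\overline{X}'}$ and $(m_0/m)\,f^{\ast}\pi_{m,m_0}^{\ast}\partial_{m_0}|_{\overline{X}'}$ uniform across the (bounded) set of irreducible components of $\pi_m^{-1}X$. The uniformity follows from the fact that both classes are controlled by the image of $X$ in the fixed base $\overline{\cA_g^{[m_0]}}$, and the cover $\overline{X}'\to\mathrm{image}(X)$ rescales them compatibly; the threshold $m_X$ depends only on this image and not on the component chosen.
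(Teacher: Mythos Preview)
Your Step~1 is fine and matches the paper's use of Zuo. The gap is in Step~2: the displayed identity
\[
K_{\overline{X}'}\ \equiv\ (g+1)\lambda_m|_{\overline{X}'}\ -\ \partial_m|_{\overline{X}'}
\]
is not valid. Mumford's formula computes the canonical class of the \emph{ambient} $\overline{\mathcal{A}_g^{[m]}}$; when $X'$ has codimension $>1$ there is no adjunction mechanism identifying $K_{\overline{X}'}$ with the restriction of $K_{\overline{\mathcal{A}_g^{[m]}}}$ ``modulo bounded discrepancy.'' The discrepancy is the first Chern class of the (log) normal bundle of $X'$, which depends on the embedding of $X$ in $\mathcal{A}_g$ and is not governed by the toroidal combinatorics of the ambient compactification. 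For the same reason, your parenthetical claim that $(g+1)\lambda_m|_{\overline{X}'}$ ``agrees with $K_{\overline{X}'}+D'$ up to bounded error'' conflates two unrelated classes: the former is the restricted ambient log canonical, the latter the intrinsic log canonical of the subvariety.

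The paper sidesteps the ambient canonical entirely after Step~1. It works with the generically finite map $\pi_m^X\colon \overline{X}'_m\to \overline{X}'$ between resolutions of the \emph{subvariety} at levels $m$ and $1$, and uses the log ramification inequality $(\pi_m^X)^{*}(K_{\overline{X}'}+D)\le K_{\overline{X}'_m}+D_m$. The substantive lemma---the content your ``uniformity'' paragraph gestures at but does not supply---is that the ambient ramification forces
\[
(\pi_m^X)^{*}D\ \ge\ c\,m\,D_m
\]
for a constant $c>0$ depending only on the multiplicities $a_{ij}$ with which the fixed compactification $\overline{X}'$ meets the ambient boundary (Proposition~\ref{Prop:lem:c}). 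This turns ``$K_{\overline{X}'}+(1-\epsilon)D$ big'' into ``$K_{\overline{X}'_m}$ big'' once $\epsilon\, c\, m>1$. Your ramification identity $\pi_{m,m_0}^{*}\partial_{m_0}=(m/m_0)\partial_m$ is the ambient input to this step, but by itself says nothing about $D$ versus $D_m$: if $\overline{X}'$ meets a boundary component of $\overline{\mathcal{A}_g}$ with high tangency, the induced ramification of $\pi_m^X$ along $D$ is correspondingly diluted, and it is exactly the constant $c=1/\max_j a_j$ that quantifies this.
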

	
	\begin{remark}
		The special case when $X = \cA_g$ is known. See~\cite[Theorem~1.1]{Hulek}, where precise values of $m_X$ are given for each $g$ in this case; these values are sharp, with the possible exception of $g = 6$, because the {Kodaira} dimension of $\cA_6$ is still unknown.
	\end{remark}
	
	\begin{remark} {\sc Nadel} \cite{Nadel} proved that the {\sc Baily-Borel} compactification of $\cA_g^{[m]}$ is {\sc Brody}-hyperbolic, and {\sc Noguchi} \cite {Noguchi} refined that result. {\sc Ullmo-Yafaev} \cite{Ullmo-Yafaev} note that another conjecture of {\sc Lang} \cite[Conjecture 2.3]{Lang},  to the effect that   a {\sc Brody}-hyperbolic projective variety over a number field $K$ has finitely many rational points, implies that $\cA_g^{[m]}(K)$ is finite for large $m$. {\sc Lang} also conjectured \cite[Conjecture 5.6]{Lang}, that a  projective variety is {\sc Brody}-hyperbolic if and only if every subvariety is of general type, but this remains  open in both directions.
	\end{remark} 

	\begin{remark} \label{Rem:Brunebarbe-new} While this paper was under review, {\sc Brunebarbe} posted \cite{Brunebarbe-new}, in particular providing in \cite[Theorem 1.6]{Brunebarbe-new} the stronger Theorem \ref{Th:strong-eventual-hyperbolicity} stated below. 
\end{remark}
	Since $\cM_g \subset \cA_g$ is locally closed, Theorem~\ref{Th:eventual-hyperbolicity} immediately implies:

	\begin{corollary}
		\label{cor:Mg}
		For any positive integer $g$ there is an integer $m_g$ such that for $m > m_g$ the moduli space $\cM_g^{[m]}$ of curves of genus $g$ with {\em abelian} full level-$m$ structure is of general type.
	\end{corollary}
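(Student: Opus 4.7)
The plan is to invoke Theorem~\ref{Th:eventual-hyperbolicity} directly, with the locally closed subvariety $X \subset \cA_g$ taken to be the image of the Torelli morphism $\tau\colon \cM_g \to \cA_g$ sending a smooth projective curve of genus $g$ to its Jacobian with the canonical principal polarization. By the Torelli theorem this morphism is injective on points of the coarse moduli spaces and realizes $\cM_g$ as a locally closed subvariety of $\cA_g$, so the hypothesis of Theorem~\ref{Th:eventual-hyperbolicity} is satisfied (this is precisely what the excerpt appeals to before stating the corollary).

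The next step is to identify $\cM_g^{[m]}$ with $\pi_m^{-1}(\cM_g) \subset \cA_g^{[m]}$. By definition, a curve with \emph{abelian} full level-$m$ structure is a curve $C$ together with a full level-$m$ structure on its Jacobian $JC$; such data are the same as the pair $(JC,\Theta_C)$ together with a level structure, i.e.\ a point of $\cA_g^{[m]}$, subject to the constraint that the underlying point of $\cA_g$ lies in the Torelli locus $\cM_g$. This gives an isomorphism of varieties $\cM_g^{[m]} \cong \pi_m^{-1}(\cM_g)$, compatible with the forgetful map to $\cA_g$.

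Applying Theorem~\ref{Th:eventual-hyperbolicity} to $X = \cM_g$ now yields an integer $m_g := m_{\cM_g}$ such that for every $m > m_g$, each irreducible component of $\pi_m^{-1}(\cM_g)$ is of general type; via the identification above, the same holds for $\cM_g^{[m]}$, which is the statement of the corollary.

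There is really no obstacle in this deduction — the substantive work is all contained in Theorem~\ref{Th:eventual-hyperbolicity}. The only point requiring any care is the identification $\cM_g^{[m]} = \pi_m^{-1}(\cM_g)$, and specifically the verification that the preimage under $\pi_m$ of the Torelli image is genuinely the moduli of curves plus abelian level structure, with no spurious components or stacky phenomena (the hyperelliptic involution acts trivially on the Jacobian and hence does not affect level structures, so no issue arises there). Once this set-theoretic matching is in hand, the corollary follows by a one-line application of the theorem.
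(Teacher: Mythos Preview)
Your proposal is correct and is exactly the paper's approach: the paper simply remarks that $\cM_g \subset \cA_g$ is locally closed, so Theorem~\ref{Th:eventual-hyperbolicity} applies immediately, and you have spelled out precisely this deduction. One minor slip: the hyperelliptic involution acts as $-1$, not trivially, on the Jacobian --- but since $-1$ is an automorphism of every principally polarized abelian variety, your conclusion that no issue arises for the identification $\cM_g^{[m]} \cong \pi_m^{-1}(\cM_g)$ is still correct.
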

	
	{\sc Brylinski} has shown a result analogous to Corollary~\ref{cor:Mg} for dihedral level structures~\cite{Brylinski}. {\sc Gavril Farkas} pointed out to us that a stronger result follows from the explicit computations of~\cite{ChEFS}: the moduli space $\cR_{g,m}$ of curves with a single $m$-torsion point on their Jacobians is of general type for large $m$. For instance, their Theorem 0.2 shows that $\cR_{g,3}$ is of general type once $g\geq 12$, and similar explicit bounds can be obtained in lower genera.


\subsection{Discussion: geometry}


	\subsubsection*{Moduli stacks}

	The proof of Theorem \ref{Th:eventual-hyperbolicity} follows  {\sc Mumford}'s ideas in \cite{Mumford}. To avoid convoluted allusions to ``orbifold structures", we systematically deploy the language of algebraic stacks.

	Denote by $\tcA_g^{[m]}$ the moduli {\em stack} of principally polarized abelian varieties of dimension $g$ with full level-$m$ structure; see \cite[I.4.11, IV.6.2(c)]{Faltings-Chai}. Its coarse moduli space in the sense of  \cite{Keel-Mori} is $\cA_g^{[m]}$; it is a fine moduli space provided $m \geq 3$, so $\cA_g^{[m]} = \tcA_g^{[m]}$ in this case. For $m=1$ we simply write $\tcA_g:= \tcA_g^{[1]}$.
 

	\subsubsection*{Logarithmic hyperbolicity}
	
	The proof of eventual hyperbolicity relies on the following:

	\begin{theorem}[Logarithmic hyperbolicity]\label{Th:log-hyp}
		Let $X \subset \tcA_g$ be a closed substack, $X'\to X$ a resolution of singularities, $X' \subset \overline X'$ a smooth compactification with $D = \overline X' \smallsetminus X'  $ a normal crossings divisor.  Then $K_{\overline X'}+D$ is big.
	\end{theorem}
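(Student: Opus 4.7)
The plan is to apply {\sc Zuo}'s theorem \cite{Zuo} on log-canonical positivity for bases of families with generically finite period map, using the universal family of principally polarized abelian varieties over $\tcA_g$ pulled back along $X'$.

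First I would reduce to the schematic setting by passing to a level cover. Fix $m\geq 3$ so that $\cA_g^{[m]}=\tcA_g^{[m]}$ is a fine moduli space carrying a universal principally polarized abelian scheme. Let $Y \subset \cA_g^{[m]}$ be the preimage of $X$ under $\pi_m$, and replace $X',\overline{X}',D$ by analogous data $Y'\to Y$ and $Y'\subset \overline{Y}'$ with $D_Y=\overline{Y}'\smallsetminus Y'$ simple normal crossings. Since $\pi_m$ is finite \'etale on the moduli stacks, bigness of $K_{\overline{Y}'}+D_Y$ is equivalent to bigness of $K_{\overline{X}'}+D$.

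Next I would pull back the universal abelian scheme along $Y'\to Y\hookrightarrow \cA_g^{[m]}$ to obtain a smooth family $f\colon \cZ\to Y'$ of principally polarized abelian $g$-folds. Because $X\hookrightarrow \tcA_g$ is a closed immersion and $Y'\to Y$ is birational, the classifying map $Y'\to \cA_g^{[m]}$ is generically finite onto its image; equivalently, the Kodaira--Spencer map of $f$ is generically injective, so the associated variation of Hodge structures has maximally varying period. After possibly further blowing up $\overline{Y}'$ along strata of $D_Y$, I would extend $f$ to a semiabelian degeneration $\overline{f}\colon \overline{\cZ}\to \overline{Y}'$ via the toroidal compactifications of {\sc Faltings-Chai}; the blow-ups can be arranged to keep $D_Y$ simple normal crossings and do not affect bigness claims for $K+D$.

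Finally, {\sc Zuo}'s theorem applies: from the generic finiteness of the period map and the existence of a semiabelian extension over the log smooth compactification, his analysis of the logarithmic Higgs bundle associated with the variation of Hodge structures produces a big line subsheaf of some symmetric power of $\Omega^1_{\overline{Y}'}(\log D_Y)$, which in turn forces $K_{\overline{Y}'}+D_Y$ to be big. The principal obstacle is the third step, namely executing the semistable/semiabelian extension compatibly with a fixed simple normal crossings boundary while preserving generic finiteness of the period map; the invocation of Zuo is, by contrast, a black box.
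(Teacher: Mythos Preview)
Your outline is the paper's: pass to level $m\geq 3$ where the moduli space is a scheme, apply {\sc Zuo}'s theorem there, and descend bigness of the log canonical class along the \'etale cover. Two remarks on emphasis. The step you call the ``principal obstacle'' --- extending to a semiabelian family over $\overline{Y}'$ --- is not needed: {\sc Zuo}'s theorem only asks that the \emph{open} part $Y'$ carry a polarized variation of Hodge structures with generically injective period map, which is immediate since $Y'\to Y\hookrightarrow \cA_g^{[m]}$ is birational onto a locally closed subscheme and the period map for $\cA_g^{[m]}$ is injective; the boundary behavior is handled inside the theorem via Deligne's canonical extension. Conversely, the step you dispatch in one clause --- ``bigness of $K_{\overline{Y}'}+D_Y$ is equivalent to bigness of $K_{\overline{X}'}+D$ since $\pi_m$ is finite \'etale on the moduli stacks'' --- is where the paper actually works: it takes the normalization $\widetilde{X}'_m$ of $\overline{X}'$ in the function field of the level cover, uses {\sc Abhyankar}'s lemma to see that $X'_m\subset\widetilde{X}'_m$ is a toroidal embedding, checks the log ramification identity $\tilde\pi^*(K_{\overline{X}'}+D)=K_{\widetilde{X}'_m}+\widetilde{D}_m$, and then invokes invariance of bigness under generically finite pullback together with a toroidal resolution. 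Your version is correct (log Kodaira dimension is a birational invariant of the open variety and is preserved by finite \'etale covers), but it deserves either a reference or the argument the paper gives.
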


	Theorem \ref{Th:log-hyp} says that, after some technical choices of resolutions and compactifications, a substack of the moduli stack $\tcA_g$ is of logarithmic general type: its canonical divisor class might not be big, but adding the normal crossings boundary divisor returns a big class.

	In the case where $X$ is a scheme, logarithmic hyperbolicity follows directly from a general result of {\sc Zuo}~\cite[Theorem~0.1(ii)]{Zuo}.  We apply {\sc Zuo}'s result to subschemes of $\tcA_g^{[m]}$ with any $m\geq 3$ in Proposition \ref{Prop:hyp-m}, and we use a descent argument to prove Theorem \ref{Th:log-hyp}.

	Big divisors can be perturbed slightly and remain big, yielding:

	\begin{corollary}\label{Cor:epsilon}
		There is an $\epsilon>0$ such that  $K_{\overline X'}+ (1-\epsilon)D$ is big.
	\end{corollary}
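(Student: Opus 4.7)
The plan is to use the openness of the big cone in the real N\'eron--Severi space $N^1(\overline{X}')_{\RR}$, together with Kodaira's lemma, to upgrade the bigness of $K_{\overline X'}+D$ from Theorem~\ref{Th:log-hyp} to the bigness of a slightly perturbed divisor.

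More concretely, the first step is to invoke Kodaira's lemma on the big $\QQ$-divisor $K_{\overline X'}+D$ to obtain a decomposition
\[
K_{\overline X'}+D \;\sim_{\QQ}\; A + E,
\]
where $A$ is an ample $\QQ$-divisor and $E$ is an effective $\QQ$-divisor on $\overline X'$. The second step is to rewrite
\[
K_{\overline X'} + (1-\epsilon) D \;=\; (K_{\overline X'}+D) - \epsilon D \;\sim_{\QQ}\; (A - \epsilon D) + E.
\]
Since the ample cone is open in $N^1(\overline X')_{\RR}$, for all sufficiently small $\epsilon > 0$ the class $A - \epsilon D$ remains ample. Hence $K_{\overline X'} + (1-\epsilon)D$ is $\QQ$-linearly equivalent to a sum of an ample and an effective $\QQ$-divisor, and is therefore big.

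I do not anticipate any obstacle: the argument is a direct application of standard facts about the big cone, and all of the content has already been packaged into Theorem~\ref{Th:log-hyp}.
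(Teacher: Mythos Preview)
Your argument is correct and matches the paper's approach: the paper simply remarks that ``big divisors can be perturbed slightly and remain big'' before stating the corollary, and your proof via Kodaira's lemma is exactly the standard way to justify that remark. You have supplied more detail than the paper does, but the idea is the same.
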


An immediate sample  result in the spirit of Corollary \ref{cor:Mg} is the following:

	\begin{corollary}
		\label{cor:KMg}
		Let $\Delta_0\subset \ocM_g$ be the boundary divisor of irreducible nodal curves on the Deligne--Mumford stack of stable curves. Then $K_{\ocM_g}+\Delta_0$ is big. 
	\end{corollary}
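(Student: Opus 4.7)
The plan is to apply Theorem \ref{Th:log-hyp} to the closure of the Torelli image in $\tcA_g$, with $\ocM_g$ itself serving as the smooth log-compactification and $\Delta_0$ as the boundary divisor.

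The Torelli morphism $\cM_g\to\tcA_g$ sending a smooth curve to its principally polarized Jacobian extends uniquely to
\[
\bar\tau\colon \ocM_g\setminus\Delta_0 \longrightarrow \tcA_g,
\]
since a stable curve has an honest abelian Jacobian precisely when every node is separating --- i.e., when it is of compact type --- and the locus of compact-type stable curves is exactly $\ocM_g\setminus\Delta_0$. For such a curve $C$, $\mathrm{Jac}(C)$ is simply the product of the Jacobians of the smooth components of $C$. Next I would verify that $\bar\tau$ is proper: given a one-parameter family of stable curves whose generic fibre is of compact type and whose family of Jacobians extends to a family of \emph{abelian} varieties over a trait, the special fibre must again be of compact type, since a non-separating node in the special fibre would force a non-trivial torus factor in the identity component of the N\'eron model of the generic Jacobian, precluding an abelian extension. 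Consequently $X:=\bar\tau(\ocM_g\setminus\Delta_0)$ is a closed substack of $\tcA_g$, and $\bar\tau\colon \ocM_g\setminus\Delta_0\to X$ is a proper, surjective, generically finite morphism, birational onto its image at the level of coarse moduli for $g\geq 2$ by the Torelli theorem.

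I would then apply Theorem \ref{Th:log-hyp} to this closed substack $X\subset\tcA_g$ with the choices
\[
X':=\ocM_g\setminus\Delta_0, \qquad \overline X':=\ocM_g, \qquad D:=\Delta_0.
\]
The stack $\ocM_g$ is smooth and proper, and $\Delta_0$ is a normal crossings divisor in it; $X'\to X$ provides the required proper resolution with smooth source. The theorem then directly yields that $K_{\ocM_g}+\Delta_0$ is big.

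The main technical issue is that, stack-theoretically, $\bar\tau\colon X'\to X$ is a $\mu_2$-gerbe rather than an isomorphism over the dense non-hyperelliptic locus, since the Jacobian carries the extra automorphism $-1$; hence $X'\to X$ is not literally birational in the stack sense, only generically finite. This is harmless: bigness of a log-canonical class is preserved under passage through a finite gerbe (pluricanonical sections agree), and more generally under a generically finite morphism of smooth log-pairs whose ramification lies in the boundary. Concretely, for any genuine log-resolution $(\overline Y',D_Y)$ of $X$ produced by Theorem \ref{Th:log-hyp}, the induced generically finite morphism $h\colon\ocM_g\to\overline Y'$ (after possibly resolving a locus supported in $\Delta_0$) satisfies $h^{-1}(D_Y)\subseteq\Delta_0$ and $K_{\ocM_g}-h^{*}K_{\overline Y'}$ effective, so bigness of $h^{*}(K_{\overline Y'}+D_Y)$ forces bigness of $K_{\ocM_g}+\Delta_0$.
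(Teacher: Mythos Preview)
Your proof is correct and follows exactly the approach the paper intends: the corollary is presented there simply as ``an immediate sample result'' of Theorem~\ref{Th:log-hyp}, applied to the closure of the Torelli image with $(\overline X',D)=(\ocM_g,\Delta_0)$. You have moreover spelled out two points the paper leaves implicit --- properness of the extended Torelli map on the compact-type locus, and the fact that $\cM_g^{ct}\to X$ is only generically finite rather than a strict stack-level resolution --- and your workaround for the latter (log Riemann--Hurwitz plus invariance of bigness under generically finite pullback, exactly as in Lemma~\ref{lem:big}(3) and the paper's use of \cite[Theorem~11.5]{Iitaka}) is sound.
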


{\sc Gavril Farkas} also pointed out that Corollary~\ref{cor:KMg} is also an easy consequence of the well-known relationships between divisors on $\ocM_g$: the divisor $K_{\ocM_g}+\Delta_0$ can be easily written as a positive combination of the hodge class $\lambda$, which is big and nef, and the boundary divisors $\Delta_i$ and the Brill-Noether divisor $BN_g$, which are effective.

	\subsubsection*{Toroidal compactifications}

	To get from logarithmic hyperbolicity in the form of~Corollary~\ref{Cor:epsilon} to eventual hyperbolicity, we must bridge the gap between the perturbed logarithmic canonical class $K + D-\epsilon D$ and an actual canonical class $K + D - D = K$. {\sc Mumford}'s idea in \cite[Proposition~4.4]{Mumford} is that, by adding level structure to the ambient stack $\tcA_g$, we can force enough ramification over $D$ in the moduli stack with level structure to spin straw ($-\epsilon D$) into gold ($-D$). 

	To make this idea precise, we must work with toroidal compactifications.  Following \cite{AMRT,Faltings-Chai} one can choose smooth toroidal compactifications $\tcA_g^{[m]} \subset \otcA_g^{[m]}$, with coarse moduli spaces $\cA_g^{[m]} \subset \ocA_g^{[m]}$, compatibly in towers: the normalization  of $\otcA^{[m]}$ in $\tcA_g^{[md]}$ is a toroidal compactification  $\otcA_g^{[md]}$  of $\tcA_g^{[md]}$ which turns out to be smooth, and similarly for coarse moduli spaces; see \cite[p.~128]{Faltings-Chai}. Indeed, a toroidal compactification of $\tcA_g^{[m]}$ is determined by a $GL_n(\ZZ)(m)$-equivariant  polyhedral decomposition of the cone of positive-definite matrices, where 
	\[
		GL_n(\ZZ)(m) = \ker\left( GL_n(\ZZ) \to GL_n(\ZZ/m\ZZ) \right).
	\]
	The compactification is smooth  if the cones satisfy the condition of \cite[Theorem 4, p. 14]{KKMS}, which can always be achieved by a further equivariant subdivision, see \cite[Theorem 11*, p. 94]{KKMS}.
A $GL_n(\ZZ)$-equivariant subdivision is automatically $GL_n(\ZZ)(m)$-equivariant, and the construction of the toroidal compactification gives a lifting of $\tcA_g^{[m]} \to \tcA_g$ to the compactification  $\otcA_g^{[m]} \to \otcA_g$. 


	\subsubsection*{Ramification}

	The bridge between logarithmic hyperbolicity to eventual hyperbolicity is the following:

	\begin{theorem}[Straw into gold] \label{Prop:GRH}
		Let $X$, $X'$, $\overline X'$, and $D$ be as in Theorem~\ref{Th:log-hyp}. Let $\oX'_m \to\oX'\times_{\otcA_g} \otcA_g^{[m]}$ be a resolution of singularities  with projection $\pi_m^X\colon \oX'_m\to \overline{X}'$. Assume that the rational map $\oX'_m \to \otcA_g$ is a morphism. Then for any $\epsilon > 0$ there is an integer $m_X$ such that for every $m>m_X$ we have $$({\pi^X_m})^* (K_{\overline X'} + (1-\epsilon)D)\ \  \leq\ \  K_{\oX'_m}.$$ 
	\end{theorem}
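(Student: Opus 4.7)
The plan is to recast the desired inequality as $R \geq (1-\epsilon)(\pi_m^X)^* D$ for the relative canonical divisor $R := K_{\oX'_m} - (\pi_m^X)^* K_{\overline X'}$, and to verify it coefficient-wise on each prime divisor $E \subset \oX'_m$ mapping into $D$. For $E$ dominating a component $D_i$ of $D$, Riemann--Hurwitz for generically finite morphisms between smooth varieties gives $\mathrm{ord}_E R \geq b_{i,E} - 1$ for $b_{i,E} := \mathrm{ord}_E((\pi_m^X)^* D_i)$, so the coefficient inequality reduces to the lower bound $b_{i,E} \geq 1/\epsilon$.

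As a preliminary reduction I would arrange $f(D) \subset \partial\otcA_g$, where $f\colon \overline X' \to \otcA_g$ is the structure morphism and $\partial\otcA_g$ is the toroidal boundary. Since $X$ is closed in $\tcA_g$, its closure $\overline X \subset \otcA_g$ satisfies $\overline X \setminus X \subset \partial\otcA_g$; passing to a further log resolution of $(\overline X, \overline X \setminus X)$, which does not affect the statement, we may assume $D = f^{-1}(\overline X \setminus X)_{\mathrm{red}}$, so that $f^{-1}(\partial\otcA_g) \subset D$ set-theoretically and hence $f^* B$ is supported in $D$ for every boundary divisor $B \subset \partial\otcA_g$.

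The main input is the local toric description of the tower $\{\otcA_g^{[m]}\}$ reviewed before the theorem statement: the finite morphism $q_m\colon \otcA_g^{[m]} \to \otcA_g$ has ramification index exactly $m$ along each toroidal boundary divisor, i.e.\ $q_m^*(\partial\otcA_g) = m \cdot \partial\otcA_g^{[m]}$. Fix $E$ dominating $D_i$, let $r\colon \oX'_m \to \otcA_g^{[m]}$ be the morphism above $\otcA_g$, choose a boundary divisor $B' \subset \partial\otcA_g^{[m]}$ with $r(E) \subset B'$ (possible since $r(E) \subset \partial\otcA_g^{[m]}$), and set $B := q_m(B')$ and $a_i := \mathrm{ord}_{D_i}(f^* B) \geq 1$. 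The commutativity $f\pi_m^X = q_m r$ then yields
\[
a_i\, b_{i,E} \;=\; \mathrm{ord}_E\bigl((f\pi_m^X)^* B\bigr) \;=\; \mathrm{ord}_E\bigl((q_m r)^* B\bigr) \;\geq\; m \cdot \mathrm{ord}_E(r^* B') \;\geq\; m,
\]
where the first equality uses that $f^* B$ is supported in $D$ with multiplicity $a_i$ along $D_i$ and that $E$ generically sits over no other component. Hence $b_{i,E} \geq m/a_i$, and taking $m_X := \max_i \lceil a_i/\epsilon\rceil$ (finite because $D$ has only finitely many irreducible components) delivers $b_{i,E} \geq 1/\epsilon$ for $m > m_X$.

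The main obstacle is the toric ramification input above, which is the essence of Mumford's ``straw-into-gold'' mechanism and requires the tower $\otcA_g^{[m]} \to \otcA_g$ to be built from compatible smooth toroidal compactifications as in \cite[IV.6.7]{Faltings-Chai}; pinning down the local Kummer cover structure along each boundary ray is where the hard work sits. A secondary technical matter is extending the coefficient inequality to exceptional divisors of the resolution $\oX'_m \to Y := \overline X' \times_{\otcA_g} \otcA_g^{[m]}$ that lie over $D$; this is handled by applying the same pullback identity to each prime divisor of $Y$ over $D_i$ (obtaining a ramification index $\geq m/a_i$ of $Y \to \overline X'$ along such divisors) and then using the discrepancy formula $R = R_\rho + \rho^* R_q$, which is effective at exceptional divisors because $Y$ has canonical (in fact toric) singularities along its non-smooth strata.
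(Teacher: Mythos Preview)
Your main computation---the bound $b_{i,E}\geq m/a_i$ for a divisor $E$ dominating a component $D_i$, obtained by pulling back a boundary component of $\otcA_g$ through the commutative square---is exactly the content of the paper's Proposition~\ref{Prop:lem:c}, just organized coefficient-wise rather than via valuations. So on the non-exceptional locus your argument and the paper's coincide.

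The gap is in your treatment of the $\rho$-exceptional divisors. Your claim that $Y=\oX'\times_{\otcA_g}\otcA_g^{[m]}$ (or its normalization) has \emph{canonical} singularities is false in general. For instance, if locally $f^*t=s_1^2s_2$ for a boundary parameter $t$ on $\otcA_g$, then $Y$ is locally $\{u^m=s_1^2s_2\}$; for $m=5$ the normalization is the cyclic quotient $\frac{1}{5}(1,2)$, whose minimal resolution has exceptional curves with discrepancies $-\tfrac{1}{5}$ and $-\tfrac{2}{5}$. Thus $R_\rho$ is \emph{not} effective, and your decomposition $R=R_\rho+\rho^*R_q$ does not give the coefficient inequality you need at such $E$. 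Toric singularities are always log canonical, but rarely canonical; you have conflated the two.

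The paper sidesteps this entirely by working with the log canonical divisor from the outset. Iitaka's inequality $(\pi_m^X)^*(K_{\oX'}+D)\leq K_{\oX'_m}+D_m$ needs only that $(\oX',D)$ be log canonical (automatic, since $D$ is normal crossings), and it holds at every divisor of $\oX'_m$, exceptional or not. The remaining task is then to show $\epsilon\,(\pi_m^X)^*D\geq D_m$, i.e.\ $\mathrm{ord}_E\bigl((\pi_m^X)^*D\bigr)\geq 1/\epsilon$ for every $E$ in $D_m$; the valuation argument of Proposition~\ref{Prop:lem:c} gives $\mathrm{ord}_E\bigl((\pi_m^X)^*D\bigr)\geq cm$ uniformly. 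Your case~(a) computation is the special case of this where the induced valuation on $\kappa(\oX')$ is divisorial at $D_i$. The fix for your case~(b) is to replace ``canonical'' by ``log canonical pair $(\tilde Y,D_{\tilde Y})$'', which yields $R_\rho\geq \rho^*D_{\tilde Y}-D_m$ and hence $R\geq(\pi_m^X)^*D-D_m$; but that is exactly Iitaka's inequality, so you recover the paper's route.
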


	Note that Theorem~\ref{Th:eventual-hyperbolicity} is an immediate consequence of Corollary~\ref{Cor:epsilon} and Theorem~\ref{Prop:GRH}. 
	We prove Theorem ~\ref{Prop:GRH} using the following general principle, see Proposition \ref{Prop:lem:c}: assume $\otcA^{[m]} \to \ocA$ is a  covering of schemes or stacks which is highly ramified over $\ocA \smallsetminus \cA$; it is shown in Proposition \ref{Prop:total-ram} that $\otcA_g^{[m]} \to \otcA_g$ is highly ramified. Assume $X \subset \tcA$ closed, with resolution $X'$,  compactification $\overline X'$ and covering $\oX'_m$. Then all ramifications of $\oX'_m  \to \overline X'$ are bounded from below. We summarize this principle as follows:

\begin{theorem}\label{Th:principle} Let $\pi_m:\ocA^{[m]} \to \ocA, m\in \NN$ be a collection of covers of smooth Deligne--Mumford stacks which are $m$-highly ramified over a normal crossings divisor $E \subset \ocA$ (see Definition \ref{Def:m-highly-ramified}). Let $X\subset \ocA \smallsetminus E$  be closed and of logarithmic general type. Then there is $m_0$, depending only on $X$, such that for all $m>m_0$, every irreducible component of $\pi_m^{-1} X$ is of logarithmic general type.
\end{theorem}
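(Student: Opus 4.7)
The plan is to run, in the abstract setting, the template established by Corollary~\ref{Cor:epsilon} combined with Theorem~\ref{Prop:GRH}. Since $X$ is of logarithmic general type, pick a log resolution of the pair $(\overline X,\overline X\cap E)$, with $\overline X$ the closure of $X$ in $\ocA$: this yields a smooth proper $\overline X'$ containing a resolution of $X$ as an open substack, with normal crossings boundary $D=\overline X'\smallsetminus X'$ such that $K_{\overline X'}+D$ is big. A log resolution only modifies loci where the pair fails to be log smooth, and those loci are contained in $\overline X\cap E$, so every component of $D$, whether strict transform or exceptional, lies above $E$. Because bigness is preserved under small numerical perturbation, there is $\epsilon>0$ depending only on $X$ with $K_{\overline X'}+(1-\epsilon)D$ still big, exactly as in Corollary~\ref{Cor:epsilon}.

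Next, for each $m$ and each irreducible component $Y$ of $\pi_m^{-1}X$, take $\overline Y'$ to be a log resolution of the normalization of $\overline X'\times_{\overline X}\overline Y$, with $\overline Y$ the closure of $Y$ in $\ocA^{[m]}$; this yields a morphism $f\colon \overline Y'\to \overline X'$ and a normal crossings boundary $D_Y$ on $\overline Y'$ containing the reduced preimage of $D$. Since $\pi_m$ is a finite cover and $Y$ dominates a component of $X$, the map $f$ is generically finite. Definition~\ref{Def:m-highly-ramified} forces the ramification index of $\pi_m$ along each component of $E$ to tend to infinity with $m$, and since every component of $f^{-1}(D)$ lies above $E$, the corresponding ramification indices $e_i$ of $f$ also tend to infinity uniformly in $m$. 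Choose $m_0$ so that for $m>m_0$ every such $e_i$ exceeds $1/\epsilon$. Then from the Riemann--Hurwitz expansion $K_{\overline Y'}=f^*K_{\overline X'}+\sum_i(e_i-1)F_i+R_{\mathrm{horiz}}$ with $R_{\mathrm{horiz}}\geq 0$, we obtain
\[
K_{\overline Y'}-f^*\bigl(K_{\overline X'}+(1-\epsilon)D\bigr)\;=\;\sum_i(\epsilon e_i-1)F_i+R_{\mathrm{horiz}}\;\geq\;0.
\]
The pullback $f^*(K_{\overline X'}+(1-\epsilon)D)$ is big, being the pullback of a big class by a generically finite morphism of smooth proper Deligne--Mumford stacks, so $K_{\overline Y'}+D_Y\geq K_{\overline Y'}$ is big and $Y$ is of logarithmic general type.

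The hardest part is the bookkeeping encoded in Definition~\ref{Def:m-highly-ramified} and exploited in Propositions~\ref{Prop:lem:c} and \ref{Prop:total-ram}: one must ensure that the high ramification of $\pi_m$ along $E$ really does transfer to high ramification of $f$ along every component of $f^{-1}(D)$, not just the transforms of $\overline X\cap E$. Log resolving $(\overline X,\overline X\cap E)$ to produce $\overline X'$ keeps every boundary component above $E$; constructing $\overline Y'$ from a normalized fibered product with $\overline X'$, rather than from an independent resolution of $\overline Y$, ensures that ramification indices along $f^{-1}(D)$ are inherited directly from the cover $\pi_m$. Once these compatibilities are arranged, the argument reduces to the Riemann--Hurwitz arithmetic above.
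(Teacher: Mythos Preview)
Your strategy matches the paper's exactly: perturb the log canonical class by Corollary~\ref{Cor:epsilon}, then use the growing ramification of $\pi_m$ to absorb the boundary. The gap is in the Riemann--Hurwitz step. You write
\[
K_{\overline Y'}=f^*K_{\overline X'}+\sum_i(e_i-1)F_i+R_{\mathrm{horiz}},\qquad f^*D=\sum_i e_i F_i,
\]
and then conclude $K_{\overline Y'}-f^*\bigl(K_{\overline X'}+(1-\epsilon)D\bigr)=\sum_i(\epsilon e_i-1)F_i+R_{\mathrm{horiz}}$. Both displayed identities presuppose that each $F_i$ dominates a single component of $D$, so that a well-defined ramification index $e_i$ simultaneously computes the coefficient of $F_i$ in the ramification divisor and in $f^*D$. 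But your $\overline Y'$ is a resolution of the normalized fibered product $\tilde Y$, and $\tilde Y$ is genuinely singular over the strata of $D$ (locally of the type $s_1^{a_1}\cdots s_\ell^{a_\ell}=u^m$). The exceptional divisors of $\overline Y'\to\tilde Y$ lying over $D$ map to loci of codimension $\ge 2$ in $\overline X'$; for those $F_i$ there is no single $e_i$ making both formulas hold, and your displayed computation breaks down precisely there.

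The paper avoids this by replacing the naive Riemann--Hurwitz with Iitaka's logarithmic ramification inequality $f^*(K_{\overline X'}+D)\le K_{\overline Y'}+D_Y$, valid for any generically finite map of log-smooth pairs, and then invoking Proposition~\ref{Prop:lem:c} in the form $f^*D\ge cm\,D_Y$. Note that the proposition bounds the \emph{multiplicity of $f^*D$} along each $F_i$, not a ramification index; its valuation-theoretic proof is designed exactly so that it applies uniformly to exceptional and non-exceptional $F_i$ alike. Combining the two yields $f^*(K_{\overline X'}+(1-\epsilon)D)\le K_{\overline Y'}+D_Y-\epsilon f^*D\le K_{\overline Y'}$ once $\epsilon cm>1$. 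Your last paragraph correctly identifies Proposition~\ref{Prop:lem:c} as the crux, but you then feed its output into a formula that does not accommodate the exceptional components it is meant to control; rerouting through Iitaka's inequality, as the paper does, closes the gap.
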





\subsection{Discussion: arithmetic} \label{Sec:arithmetic}


	\subsubsection*{Context: elliptic curves}

	As mentioned already, in 1969 {\sc Manin} proved uniform boundedness of $K$-rational $p^r$-torsion on elliptic curves ($K$ a number field, $p$ a fixed prime)~\cite{Manin}. In modern terms, {\sc Manin}'s result follows because the modular curve $X_1(p^r)$ has  genus $>1$ for appropriate $r$, so by {\sc Faltings}'s theorem there are only finitely many elliptic curves with a $K$-rational $p^r$-torsion point. For each of these finitely many curves, $r$ is bounded by  the {\sc Mordell-Weil} Theorem. Alternatively, choosing a prime $\ell\neq p$ of good reduction, the prime-to-$\ell$ torsion injects into the group of points of the reduced curve, and is hence finite. Hence the $p$-primary torsion of these curves is uniformly bounded. 
	Remarkably, {\sc Manin} proved his result prior to {\sc Faltings}'s proof of {\sc Mordell}'s conjecture, using {\sc Demjanenko}'s method to show that $X_1(p^r)(K)$ is finite. 

	Nearly a decade later, {\sc Mazur} \cite{Mazur} proved uniform boundedness for rational torsion across all elliptic curves over $\QQ$: if $P$ is an $m$-torsion $\QQ$-point on an elliptic curve over $\QQ$ then $m\leq 12, m\neq 11$. This deep work makes use of {\sc Manin}'s theorem.

	{\sc Kamienny} \cite{Kamienny} proved uniform boundedness for all $K$-rational torsion on elliptic curves over \emph{all} quadratic fields $K$, and {\sc Kamienny} and {\sc Mazur}  \cite{Kamienny-Mazur} proved uniform boundedness for all rational torsion on elliptic curves over fields of low degrees. In 1996, {\sc Merel} \cite{Merel} proved uniform boundedness for all torsion on elliptic curves over fields of any fixed degree.
	
	\begin{theorem}[{\sc Merel}] 
		Given an integer $d$ there is $m$ such that the torsion group of an elliptic curve over a number field of degree $\leq d$ has order $\leq m$.
	\end{theorem}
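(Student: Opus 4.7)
The plan is first to reduce the task to bounding the order of prime-order torsion points. If $P \in E(K)$ has order $N = \prod p_i^{a_i}$, each prime $p_i$ is treated separately: for a prime $\ell$ of good reduction the prime-to-$\ell$ torsion injects into $E(\FF_q)$ for a residue field of bounded size, and a {\sc Manin}-type argument applied over $K$ bounds each $p$-primary exponent $a_i$ once $p$ itself is fixed (the modular curve $X_1(p^r)$ has genus growing with $r$, and only finitely many pairs $(E,P)$ can survive). Hence it suffices to bound the primes $p$ for which some $E/K$ with $[K:\QQ] \leq d$ admits a $K$-rational point of order $p$, uniformly in $K$.

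Given such a pair $(E,P)$, I would produce a $K$-rational point $x$ on the modular curve $Y_1(p)$. Its Galois orbit is an effective $\QQ$-rational divisor of degree $\leq d$ on $X_1(p)$, that is, a $\QQ$-point of the symmetric power $X_1(p)^{(d)}$. Composing with the {\sc Abel--Jacobi} embedding based at the cusp $\infty$ lands one in the {\sc Jacobian} $J_1(p)$, and I would then project to the \emph{winding quotient} $J_e$ introduced by {\sc Mazur}. The decisive arithmetic input is that $J_e(\QQ)$ is finite: the winding element generates a rank-zero factor of the {\sc Mordell--Weil} group, thanks to {\sc Kolyvagin--Logachev} building on {\sc Gross--Zagier}. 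This rigidity replaces {\sc Mordell}'s conjecture, which would not yield a bound uniform in $K$.

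The technical heart, extending the {\sc Kamienny--Mazur} method, is to show that for all $p$ sufficiently large compared to $d$, the composed map $\phi\colon X_1(p)^{(d)} \to J_e$ is a \emph{formal immersion} at the cuspidal point $d\cdot\infty$ over $\ZZ_\ell$, for a suitable auxiliary prime $\ell$ depending only on $d$. Granted this, pick a prime $\lambda$ of $K$ over $\ell$ where $E$ has good reduction: an explicit analysis at $\infty$ on the N\'eron model of $X_1(p)$, together with the action of Frobenius on the specialization of the torsion section, shows that the reduction of $x$ coincides with the reduction of $d\cdot\infty$. Formal immersion lifts this equality over $\ZZ_\ell$, forcing $x = d\cdot\infty$ already on $X_1(p)$, which contradicts that $x$ parametrizes an honest elliptic curve equipped with a point of exact order $p$.

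The main obstacle is establishing the formal immersion for a threshold on $p$ depending only on $d$. Via the cotangent space of $J_e$ at $0$ and the $q$-expansion principle, this reduces to a statement in the Hecke algebra: one must show that the operators $T_1,T_2,\ldots,T_d$ act by linearly independent vectors on the winding element modulo the Eisenstein ideal. {\sc Merel} supplies this through explicit estimates on the action of Hecke operators on modular symbols, combined with multiplicity-one for newforms. Making the independence uniform in $p$ and controlling the choice of auxiliary prime $\ell$ in a way that depends only on $d$ is the most delicate quantitative issue, and is where the bulk of the work goes.
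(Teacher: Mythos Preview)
The paper does not give a proof of this statement: it is quoted in \S\ref{Sec:arithmetic} purely as historical context, attributed to {\sc Merel}~\cite{Merel}, with no argument supplied. There is therefore nothing in the paper to compare your proposal against.

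That said, your outline is a faithful sketch of {\sc Merel}'s actual strategy: the reduction to bounding primes $p$ with a $K$-rational point of order $p$, the passage to $X_1(p)^{(d)}$ and the winding quotient $J_e$, the finiteness of $J_e(\QQ)$ via {\sc Kolyvagin--Logachev}, and the formal immersion criterion at $d\cdot\infty$ reducing to linear independence of $T_1,\ldots,T_d$ on the winding element. One quibble: in your first paragraph, the reduction-mod-$\ell$ argument alone does not bound the $p$-primary exponent uniformly in $K$ once $p$ is fixed, since the residue field size depends on $K$; you need the genus growth of $X_1(p^r)$ together with a gonality/degree-$d$-points argument (e.g., via {\sc Frey}'s result that $X_1(p^r)$ has only finitely many points of degree $\leq d$ once its gonality exceeds $2d$, which {\sc Abramovich}'s gonality bounds supply for large $r$). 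But this is a secondary issue compared to the main bound on $p$, which you have described correctly.
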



	\subsubsection*{Context: one-parameter families of abelian varieties} 
	
	Our results do not yield bounds for torsion points on abelian varieties, even conditionally, since we require full-level structure. {\sc Cadoret} and {\sc Tamagawa} \cite{Cadoret-Tamagawa-uniform} prove the exact analogue of {\sc Manin}'s result, and several variations thereof, for one-parameter families of abelian varieties.

	\begin{theorem}[{\cite[Theorem 1.1]{Cadoret-Tamagawa-uniform}}] 
		Given a prime $p$ and a one-parameter family $\cA \to S$ of abelian varieties of dimension $g$ over a number field $K$, there is an integer $r$ such that for all $s\in S(K)$ the order of $\cA_s[p^{\infty}](K)$ is  at most $p^r$.
	\end{theorem}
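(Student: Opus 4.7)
My plan is to imitate {\sc Manin}'s proof for elliptic curves in a relative setting, replacing classical modular covers by the $p$-adic Galois monodromy of the family and using {\sc Faltings}'s theorem in place of the genus bound for $X_1(p^r)$. Shrinking $S$ if necessary, I assume $\cA \to S$ has good reduction everywhere; the finitely many omitted points can be handled individually by {\sc Mordell-Weil} and absorbed in the final constant. Fix a geometric generic point $\eta$ and consider the $p$-adic representation
\[
\rho\colon \pi_1(S, \eta) \longrightarrow \mathrm{GL}(T_p \cA_\eta) \cong \mathrm{GL}_{2g}(\ZZ_p),
\]
with geometric monodromy image $\Pi = \rho\bigl(\pi_1(S_{\bar K}, \eta)\bigr)$. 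For each $s \in S(K)$ specialization yields a representation $\rho_s$ of $\mathrm{Gal}(\bar K/K)$ whose image $G_s$ may be compared with $\Pi$ inside $\mathrm{GL}_{2g}(\ZZ_p)$; the index $[\Pi : G_s \cap \Pi]$ controls how much $p$-power torsion can become $K$-rational at $s$.

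The technical heart is a \emph{uniform open image theorem}: there is an integer $N$, depending on $\cA \to S$ and $K$ but not on $s$, such that $[\Pi : G_s \cap \Pi] \leq N$ for every $s \in S(K)$. I would prove this by contradiction. For each open subgroup $U \subset \Pi$ of $p$-power index let $S_U \to S$ be the corresponding \'etale cover; an $s$ violating the bound for that index gives a $K$-point on a twist of $S_U$. Using what is known about the image of monodromy for families of polarized abelian varieties --- in particular, that $\Pi$ is $p$-adically open in a semisimple subgroup of $\mathrm{GSp}_{2g}(\ZZ_p)$ --- one bounds the geometric genus of $S_U$ below by an unbounded function of $[\Pi : U]$ via {\sc Riemann-Hurwitz}. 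For $[\Pi:U]$ large enough every component has genus $\geq 2$, and {\sc Faltings}'s theorem forces $S_U(K)$ to be finite. A compactness/specialization argument on the space of closed subgroups of $\Pi$ then upgrades this ``finite for each fixed index'' statement to a single uniform bound $N$ valid across all such $U$ simultaneously.

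Granting the uniform open image theorem, the conclusion is immediate. If $\cA_s(K)$ has a point of order $p^r$, then $G_s \cap \Pi$ fixes a nonzero vector in $\cA_\eta[p^r] \cong (\ZZ/p^r\ZZ)^{2g}$. Because $\Pi$ contains an open pro-$p$ subgroup acting without nonzero fixed vectors on $T_p \cA_\eta$, the stabilizer in $\Pi$ of any such vector has index going to $\infty$ with $r$; combined with $[\Pi : G_s \cap \Pi] \leq N$, this forces a uniform upper bound on $r$ depending only on $\cA \to S$, $p$, and $K$. The main obstacle is the uniform open image theorem itself: finiteness of $S_U(K)$ for each individual $U$ is a direct application of {\sc Faltings}, but the uniformity of the bound $N$ as $U$ varies --- essentially the whole content of \cite{Cadoret-Tamagawa-uniform} --- requires a subtle specialization argument in which the one-parameter hypothesis on $S$ is used in an essential way.
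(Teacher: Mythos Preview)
The paper does not prove this theorem; it is quoted from \cite{Cadoret-Tamagawa-uniform} and accompanied only by a two-sentence summary of the strategy. Your outline is broadly consistent with that summary: one passes to covers $S_U \to S$ indexed by open subgroups of the geometric monodromy, applies {\sc Faltings} once the genus is large, and then handles the remaining finitely many points fiberwise.

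There is, however, a genuine gap in your plan, and it is exactly the point the paper singles out. You assume that the geometric monodromy image $\Pi$ is $p$-adically open in a semisimple group and, crucially, that it ``contains an open pro-$p$ subgroup acting without nonzero fixed vectors on $T_p\cA_\eta$.'' Neither statement is automatic. If $\cA \to S$ admits an isotrivial abelian subscheme, then $\Pi$ acts trivially on the corresponding summand of the Tate module, so it has fixed vectors in every $\cA_\eta[p^r]$ and the stabilizer index does not tend to infinity with $r$. In this situation your {\sc Riemann--Hurwitz} lower bound on the genus of $S_U$ also collapses: the covers corresponding to subgroups containing the kernel of the action on the isotrivial part have bounded genus. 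The paper's summary records that \cite[Theorem~1.2]{Cadoret-Tamagawa-uniform} identifies the isotrivial obstruction as the \emph{only} obstruction to genus growth, and that \cite[Lemma~4.5]{Cadoret-Tamagawa-uniform} bounds torsion on the isotrivial factors separately. Your plan needs to incorporate both of these ingredients: first split off the isotrivial part (up to isogeny), bound its $p$-primary $K$-torsion by a fixed constant independent of $s$, and only then run the open-image/{\sc Faltings} argument on the quotient family, where the monodromy really does act without fixed vectors.
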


	{\sc Cadoret} and {\sc Tamagawa} use {\sc Faltings}'s theorem for appropriate towers of covers of $S$. The genus of these covers need not grow as one goes up a tower, e.g., if $\cA$ contains an isotrivial abelian subvariety. However~\cite[Theorem 1.2]{Cadoret-Tamagawa-uniform} shows this is the only condition that can stunt genus growth, and~\cite[Lemma~4.5]{Cadoret-Tamagawa-uniform} bounds the torsion in the isotrivial factors. 

	One can combine the various methods of {\sc Cadoret-Tamagawa,  Ellenberg-Hall-Kowalski, Hwang-To, Bakker-Tsimerman},  with the results here towards giving conditional bounds on something smaller than full-level structure of order $p^r$, perhaps even $p^r$-torsion points. We hope to address this in the near future. In \cite{Cadoret}, {\sc Cadoret} does precisely this for Hilbert modular surfaces.
	
	Closely related results are proved in \cite{EHK}: for instance,  \cite[Theorem 7]{EHK} shows that there are only finitely many  fibers with $p$ torsion for large $p$.


	\subsubsection*{{\sc Lang}'s conjecture}

	There are several conjectures surrounding rational points on varieties of general type often attributed to {\sc Lang}. The following is also known as the {\em {\sc Bombieri--Lang} conjecture}:

	\begin{conjecture}[{\sc Lang}'s conjecture]
		\label{conj:Lang}
		Let $X$ be a positive-dimensional variety of general type defined over a number field $K$. Then the set of rational points $X(K)$ is not Zariski-dense in $X$.
	\end{conjecture}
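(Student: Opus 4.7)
The statement is the strong form of the Bombieri--Lang conjecture, one of the central open problems in Diophantine geometry; no proof is known even in dimension two, so any ``proof proposal'' I offer is necessarily a description of an approach that would be carried out if the needed machinery existed. The template I would try to follow is the Vojta--Faltings method, which has been executed by Faltings for subvarieties of abelian varieties (the Mordell--Lang theorem) and by Vojta for certain products of curves. The goal is to promote the bigness of $K_X$ into an effective height bound on rational points.

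First I would reduce to the case of smooth projective $X$ with $K_X$ big by passing to a desingularization and, if necessary, an Iitaka model. Next I would attempt to construct, for every pair $P, Q \in X(K)$ of ``well-separated'' rational points of comparable and sufficiently large height, an auxiliary global section of a large tensor power of $\mathrm{pr}_1^* K_X \otimes \mathrm{pr}_2^* K_X$ on $X \times X$ vanishing to prescribed high order at $(P, Q)$. Combined with an index estimate via Faltings's product theorem and a Diophantine approximation inequality for the canonical height, this should force either $P$ or $Q$ to lie in a proper subvariety $Y \subsetneq X$. An induction on $\dim X$, using that each such $Y$ inherits an appropriate positivity of its log-canonical class along its smooth locus, would then yield non-Zariski-density of $X(K)$.

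The main obstacle---and the reason the conjecture has resisted proof for over forty years---is the construction and the vanishing analysis of the auxiliary section in the absence of an ambient abelian variety. In the Mordell--Lang setting, the rich endomorphism structure and the translation action of $A(K)$ supply both the sections and the control on their zero locus; one can separate points by translation and exploit the theory of heights on abelian varieties to calibrate the index estimate. For a general variety of general type, sections of $mK_X$ exist in abundance by Riemann--Roch once $m\gg 0$, but estimating their local behavior at rational points appears to require either a non-abelian analogue of Faltings's equidistribution input or an entirely new arithmetic tool, none of which is presently available. Consequently the best one can hope for in a proof proposal is to identify additional geometric hypotheses on $X$ under which the Vojta--Faltings machinery can be made to run, rather than to establish the conjecture in the generality stated; this is precisely why \textbf{Theorem~\ref{Th:main}} of the present paper is conditional on Conjecture~\ref{conj:Lang} rather than unconditional.
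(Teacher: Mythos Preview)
Your proposal is not a proof, and you correctly recognize this: Conjecture~\ref{conj:Lang} is stated in the paper as a conjecture, not as a theorem, and the paper offers no proof of it whatsoever. The authors explicitly \emph{assume} Lang's conjecture as a hypothesis in Theorem~\ref{Th:main} and its variants; the only cases they mention as known are subvarieties of abelian varieties (Faltings) and curves (Faltings's proof of Mordell). So there is no ``paper's own proof'' to compare your attempt against.

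Your discussion of the Vojta--Faltings template and its obstructions is accurate and appropriately cautious, and your closing sentence correctly identifies why Theorem~\ref{Th:main} is conditional. But as a proof proposal for the statement as written, it is---by your own admission---vacuous: you have outlined a strategy known not to go through in general, and identified the missing ingredient without supplying it. If the assignment was to attempt a proof of this particular statement, the honest answer is simply that it is open and that the paper does not claim otherwise.
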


	This is a natural extension of {\sc Mordell}'s conjecture, as a curve is of general type precisely when its genus is $>1$. It is known at present for subvarieties of abelian varieties \cite{Faltings1,Faltings2} and few other cases. 
%
%
%
%
%


	\subsubsection*{From eventual hyperbolicity to the uniform power bound}

	The  following corollary of Theorem \ref{Th:eventual-hyperbolicity} provides a bridge to Theorem \ref{Th:main}. We again consider a subvariety $X \subset \cA_g$ defined over a number field $K$. 

	\begin{corollary}[of Theorem \ref{Th:eventual-hyperbolicity}]
		\label{Cor:not-dense}
		Let $X(K)_{[m]}$ be the set of $K$-rational points of $X$ corresponding to abelian varieties $A/K$ admitting full level-$m$ structure. 
		\begin{enumerate}
			\item Assume that {\sc Lang}'s conjecture~\ref{conj:Lang} holds and let $m>m_X$. If $\dim X \geq 2$ then $X(K)_{[m]}$ is not Zariski-dense in $X$. \label{dimgeq2}
			\medskip
			\item If $\dim X = 1$ and $m>m_X$ then $X(K)_{[m]}$ is (unconditionally) not Zariski-dense in $X$. \label{dim1}
			\medskip
			\item If $\dim X = 0$, then $X(K)_{[m]} = \emptyset$ for all $m \gg 0$. \label{dim0}
		\end{enumerate}
	\end{corollary}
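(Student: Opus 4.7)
The plan is to identify $X(K)_{[m]}$ with the image, under the finite forgetful map $\pi_m$, of the $K$-points of $Z_m = \pi_m^{-1}(X) \subset \cA_g^{[m]}$. After enlarging $m_X$ if necessary to ensure $m_X \geq 3$, the space $\cA_g^{[m]}$ agrees with the fine moduli $\tcA_g^{[m]}$, so a $K$-point of $Z_m$ is precisely a $K$-isomorphism class of a pair $(A,\phi)$ with $A/K$ having moduli in $X$ and $\phi$ a full level-$m$ structure on $A$ defined over $K$. All three parts will then reduce to controlling $Z_m(K)$ via Theorem~\ref{Th:eventual-hyperbolicity}.

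For (1), Theorem~\ref{Th:eventual-hyperbolicity} says that for $m > m_X$ every irreducible component of $Z_m$ is of general type, and by finiteness of $\pi_m$ has dimension $\dim X \geq 2$. Applying Conjecture~\ref{conj:Lang} to each component (passing to a finite extension of $K$ if needed to make each one geometrically irreducible, which only enlarges the sets of rational points under consideration) shows that its $K$-points lie in a strictly smaller closed subvariety. Taking the finite union and pushing forward by the finite map $\pi_m$, which preserves dimensions, produces a closed subvariety of $X$ of dimension strictly less than $\dim X$ that contains $X(K)_{[m]}$. For (2), the same reasoning applies, but each component of $Z_m$ is now a curve of general type whose smooth projective model has genus $\geq 2$; Faltings's theorem then makes $Z_m(K)$, and hence $X(K)_{[m]}$, finite.

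The hard part is (3), where Lang gives nothing because $Z_m$ is zero-dimensional, and a direct Mordell--Weil argument is obstructed by the fact that a single moduli point may carry infinitely many $K$-twists. I would argue by contradiction: suppose $X(K)_{[m]} \neq \emptyset$ for infinitely many $m$. Since $X(K)$ is finite, pigeonhole produces a single $x \in X(K)$ lying in $X(K)_{[m]}$ for $m$ ranging over an infinite set $M$. Fix $m_0 \in M$ with $m_0 \geq 3$; the fiber $\pi_{m_0}^{-1}(x)(K)$ is a finite set of $K$-isomorphism classes of pairs $(A, \phi_0)$. For each $m \in M$ divisible by $m_0$, choose a lift $y_m \in \pi_m^{-1}(x)(K)$; the forgetful map $\cA_g^{[m]} \to \cA_g^{[m_0]}$ sends $y_m$ into the finite set $\pi_{m_0}^{-1}(x)(K)$, so a second pigeonhole extracts a subsequence $m_k \to \infty$ along which all these images coincide with a single $y^\star = (A, \phi_0)$. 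The equality at level $m_0$ supplies a $K$-isomorphism between the abelian variety underlying $y_{m_k}$ and $A$, and transporting the level-$m_k$ structure along this isomorphism endows the single variety $A$ with a full level-$m_k$ structure for every $k$. The étale summand then embeds $(\ZZ/m_k\ZZ)^g \hookrightarrow A(K)_{\mathrm{tors}}$ for every $k$, forcing $A(K)_{\mathrm{tors}}$ to be infinite and contradicting the Mordell--Weil theorem.
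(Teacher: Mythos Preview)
Your arguments for parts (1) and (2) are correct and coincide with the paper's: push the $K$-points of $\pi_m^{-1}(X)$ forward under the finite map $\pi_m$ after applying Lang's conjecture, respectively Faltings's theorem, to each general-type component.

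For part (3) you are more careful than the paper, which simply says that ``the torsion subgroup of an abelian variety $A/K$ corresponding to $X$ is finite'' and appeals to Mordell--Weil. You rightly observe that a $K$-point of the coarse space $\cA_g$ may be represented by infinitely many non-isomorphic $K$-twists, so bounding the torsion of one representative does not immediately yield a uniform $m_0$ beyond which \emph{no} twist admits level-$m$ structure. Your remedy---pigeonholing into the finite fibre $\pi_{m_0}^{-1}(x)(K)$ of the \emph{fine} moduli space $\cA_g^{[m_0]}$ to pin down a single $A/K$---is a genuine gain in rigor over the paper's one-line argument.

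There is, however, a gap in your pigeonhole step: after fixing $m_0\in M$ you work with those $m\in M$ divisible by $m_0$, but nothing guarantees that $M$ contains infinitely many such $m$. Since $M$ is closed under taking divisors, the relevant bad case is $M=\{1\}\cup\{\text{primes}\}$, where no $m_0\geq 3$ has any multiple in $M$ other than itself. This gap is harmless for every application of the corollary in the paper---in the proofs of Theorems~\ref{Th:main}, \ref{Th:main-twisted}, and~\ref{Th:tower} the levels run through a divisibility chain, and then your argument goes through verbatim. To prove (3) as stated, one can split into cases: if $M$ contains all powers of some prime, your argument applies with $m_0$ a suitable prime power; otherwise $M$ contains infinitely many primes $p$, and for each such $p$ the corresponding twist $A_p$ becomes isomorphic to a fixed $A_0$ over an extension $L_p/K$ of degree bounded in terms of the finite group $\operatorname{Aut}(A_0,\lambda)$, so that $A_0(L_p)\supseteq(\ZZ/p\ZZ)^g$; reducing $A_0$ modulo a fixed prime of good reduction, where the residue extension has bounded degree, then bounds $p$.
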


	Corollary \ref{Cor:not-dense} is proven in \S\ref{Sec:reduction}. Theorem \ref{Th:main} follows by a simple argument involving noetherian induction, see \S\ref{Sec:proof}. 

While this paper was under review, {\sc Brunebarbe} posted the following result:

	\begin{theorem}[{\cite[Theorem 1.6]{Brunebarbe}}]
		\label{Th:strong-eventual-hyperbolicity} 
		Fix an integer $g\geq 1$. Then for $m>12g$ every subvariety of $\tcA_g^{[m]}$ is of general type.
	\end{theorem}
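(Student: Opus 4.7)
The plan is to sharpen the ``straw into gold'' argument of Theorem~\ref{Prop:GRH} so that the required bound on $m$ depends only on $g$, not on the subvariety.

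The classical identity on a smooth toroidal compactification of the Siegel modular stack reads
$$K_{\otcA_g} + D \equiv (g+1)\lambda,$$
where $\lambda$ is the Hodge line bundle. Via compatible resolutions, this recasts the logarithmic hyperbolicity of subvarieties: for $X\subset \tcA_g$ with compactified resolution $\overline X'$ and normal crossings boundary $E$, bigness of $K_{\overline X'} + (1-\epsilon) E$ corresponds, modulo contributions from the resolution, to bigness of $(g+1)\lambda|_{\overline X'} - \epsilon E$. The key new ingredient I would establish is a \emph{uniform} refinement of Corollary~\ref{Cor:epsilon}: for every closed subvariety $X\subset \tcA_g$ and every admissible $\overline X'$ as above, the $\QQ$-divisor
$$(g+1)\lambda\big|_{\overline X'} \;-\; \tfrac{1}{12g}\,E$$
is big. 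This would be proved by equipping $\lambda$ with the singular Hermitian metric induced by the $Sp_{2g}(\RR)$-invariant Bergman metric on the Siegel upper half-space, analyzing its Nakano-semipositive curvature, and controlling its degeneration near the toroidal boundary strata via an explicit Poincar\'e-type weight coming from the root structure of $Sp_{2g}$. The constant $1/(12g)$ is the precise comparison between these two growth rates.

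Granted this uniform positivity, let $Y\subset \tcA_g^{[m]}$ be a closed irreducible subvariety with image $X = \pi_m(Y)\subset \tcA_g$, and choose compatible resolutions $\overline Y' \to \overline X'$ with boundary divisors $E_m$ and $E$. Total ramification of order $m$ along the boundary (Proposition~\ref{Prop:total-ram}), together with Riemann-Hurwitz, yields
$$K_{\overline Y'} \;\geq\; (\pi_m^Y)^*\bigl((g+1)\lambda\big|_{\overline X'} \;-\; \tfrac{1}{m}\,E\bigr).$$
For $m > 12g$ we have $1/m < 1/(12g)$, so the right-hand side is big by the uniform positivity, hence $K_{\overline Y'}$ is big and $Y$ is of general type.

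The main obstacle is establishing the uniform positivity with the explicit constant $1/(12g)$. The soft reductions (compatible resolutions, behavior of canonical classes under totally ramified covers, and passage from $Y$ to $X$) are specializations of techniques already present here. What is genuinely new is the Hermitian-geometric estimate on $\lambda$, which must hold globally and uniformly on $\tcA_g$, rather than being extracted separately for each subvariety as in the qualitative application of Zuo's theorem used to prove Theorem~\ref{Th:log-hyp}.
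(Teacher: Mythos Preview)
The paper does not prove this theorem. It is quoted as \cite[Theorem 1.6]{Brunebarbe}, a result of {\sc Brunebarbe} that appeared while the present paper was under review; see the sentence immediately preceding the statement and Remark~\ref{Rem:Brunebarbe-new}. There is therefore no ``paper's own proof'' to compare against.

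As for your proposal on its merits: you have correctly identified the architecture. The reduction to a uniform positivity statement of the shape ``$(g+1)\lambda - \epsilon_0 E$ is big on every compactified subvariety, with $\epsilon_0$ depending only on $g$'' is exactly right, and combining such a statement with the total ramification of Proposition~\ref{Prop:total-ram} does yield the conclusion for $m > 1/\epsilon_0$. But you openly acknowledge that the uniform estimate with $\epsilon_0 = 1/(12g)$ is the ``main obstacle,'' and you offer only a heuristic for it (Bergman metric, Poincar\'e-type weights, root structure of $Sp_{2g}$). That estimate \emph{is} the theorem: everything else is already in the present paper. So what you have written is a plausible strategy sketch, not a proof, and the missing step is not a detail but the entire substance.

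One technical caution on your sketch: the identity $K_{\otcA_g} + D \equiv (g+1)\lambda$ lives on the ambient toroidal compactification. When you pass to a resolved compactification $\overline X'$ of a subvariety, $K_{\overline X'}$ differs from $K_{\otcA_g}|_{\overline X'}$ by adjunction terms and exceptional contributions, so the displayed Riemann--Hurwitz inequality
\[
K_{\overline Y'} \;\geq\; (\pi_m^Y)^*\bigl((g+1)\lambda\big|_{\overline X'} - \tfrac{1}{m}E\bigr)
\]
is not literally correct as stated. Making it precise requires either working directly with the singular metric on $\lambda$ restricted to $Y$ (as {\sc Brunebarbe} does) or carefully tracking the discrepancy, and in either case the hard analytic input reappears.
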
	
This implies the following more uniform version of Theorem  \ref{Th:main}:
	\begin{corollary}[{\cite[Conjecture 1.10]{Brunebarbe}}]
		\label{Th:uniform-strong}
		Assume that {\sc Lang}'s conjecture~\ref{conj:Lang} holds. Fix an integer $g\geq 1$ and  $m>12g$. Let $K$ denote a number field. Then  there are only finitely many principally polarized abelian varieties $A/K$ of dimension $g$ with full level-$m$ structure.
	\end{corollary}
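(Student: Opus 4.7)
The plan is to derive the corollary from Theorem~\ref{Th:strong-eventual-hyperbolicity} by a short Noetherian argument combined with Lang's conjecture. Since $m > 12g \geq 12 > 3$, the moduli stack $\tcA_g^{[m]}$ is in fact a fine moduli space and coincides with its coarse moduli scheme $\cA_g^{[m]}$; the $K$-points of this scheme are in bijection with the isomorphism classes of principally polarized abelian varieties $A/K$ of dimension $g$ equipped with a full level-$m$ structure, so the task reduces to showing that $\cA_g^{[m]}(K)$ is finite.

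Assume for contradiction that $\cA_g^{[m]}(K)$ is infinite, and let $Z \subseteq \cA_g^{[m]}$ be the Zariski closure of this set, viewed as a reduced closed $K$-subscheme. Then $Z$ is positive-dimensional, and a standard point-set argument shows that every $K$-irreducible component $W$ of $Z$ satisfies: the intersection $W \cap \cA_g^{[m]}(K)$ is Zariski-dense in $W$. Indeed, if some component failed this, removing its open complement in $Z$ would produce a strictly smaller closed set still containing $\cA_g^{[m]}(K)$, contradicting the definition of Zariski closure. Choose such a component $W$ of positive dimension. Passing to $\overline K$, if $W_{\overline K} = V_1 \cup \cdots \cup V_r$ is the decomposition into geometric irreducible components, then any $K$-rational point of $W$ is Galois-invariant and hence lies in $\bigcap_\sigma \sigma V_i$ for every $i$; the density of $W(K)$ in $W$ therefore forces $r = 1$, so $W$ is geometrically irreducible and defined over $K$.

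Now we apply Theorem~\ref{Th:strong-eventual-hyperbolicity}: the positive-dimensional subvariety $W \subseteq \tcA_g^{[m]}$ is of general type. Since $W$ is defined over $K$, Lang's conjecture (Conjecture~\ref{conj:Lang}) asserts that $W(K)$ is not Zariski-dense in $W$, contradicting the construction of $W$. Hence $\cA_g^{[m]}(K)$ is finite, as required. The entire argument is essentially a one-step reduction: the real work is done by Brunebarbe's uniform hyperbolicity theorem, and no analogue of the noetherian-induction or toroidal-ramification machinery used to prove Theorem~\ref{Th:main} is needed, because Theorem~\ref{Th:strong-eventual-hyperbolicity} already ensures that Lang's conjecture applies uniformly to \emph{every} subvariety of $\tcA_g^{[m]}$. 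The only slightly delicate point is the passage between geometric and arithmetic irreducibility, which is forced automatically by the density of the rational points on each component.
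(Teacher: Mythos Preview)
Your argument is correct and is precisely the direct derivation the paper has in mind: the paper offers no separate proof of this corollary, presenting it simply as an immediate consequence of {\sc Brunebarbe}'s Theorem~\ref{Th:strong-eventual-hyperbolicity} together with Lang's conjecture. Your reduction to a geometrically irreducible component with dense $K$-points, followed by a single application of Lang, is exactly the intended one-step argument, and your observation that the noetherian-induction and ramification machinery of \S\ref{Sec:proof} becomes unnecessary here is on point.
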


%


	\subsubsection*{Further variants} 

	There are variants of $\cA_g^{[m]}$ one can use, giving rise to other uniform power bounds. Let $G_K$ denote the absolute Galois group of a number field $K$. Fix a Galois representation $\rho$ of $G_K$ on $\ZZ_p^{2g}$. Such a representation reduces to representations $\rho_{p^r}$ of $G_K$ on the quotients $(\ZZ/p^r\ZZ)^{2g}$. Let $V(p^r)$ be the group scheme associated to $\rho_{p^r}$ (see~\cite[\S3~(7)]{Shatz}). A level-$\rho_{p^r}$ structure on an abelian variety $A$ of dimension $g$ is an isomorphism of $A[p^r] \cong V(p^r)$. An abelian variety has full level-$p^r$ structure if it has level $\rho_{p^r}$ for the representation $\rho = \ZZ_p^{g} \times\ZZ_p(1)^{g}$.

	We have the following result slightly generalizing  Theorem \ref{Th:main}:

	\begin{theorem}[Twisted uniform power bound]
		\label{Th:main-twisted}
		Assume that {\sc Lang}'s conjecture holds. Fix an integer $g$, a prime $p$, a number field $K$, and a Galois representation $\rho$ of $G_K$ on $\ZZ_p^{2g}$. Then there is an integer $r$ such that only finitely many principally polarized abelian varieties $A/K$ have level-$\rho_{p^r}$ structure.
	\end{theorem}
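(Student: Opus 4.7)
The plan is to mimic the proof of Theorem~\ref{Th:main}, replacing $\cA_g^{[p^r]}$ by an appropriate twist. Let $\cA_g^{[\rho_{p^r}]}$ denote the $K$-scheme representing the functor that sends a $K$-scheme $S$ to the set of pairs $(A,\alpha)$, where $A$ is a principally polarized abelian scheme of relative dimension $g$ over $S$ and $\alpha\colon A[p^r]\xrightarrow{\sim} V(p^r)\times_K S$ is an isomorphism of finite flat group schemes; this functor is representable by a fine moduli scheme once $p^r\geq 3$. The choice $\rho=\ZZ_p^g\oplus\ZZ_p(1)^g$ recovers $\cA_g^{[p^r]}$, and in general $\cA_g^{[\rho_{p^r}]}$ is the twist of $\cA_g^{[p^r]}$ by the cocycle expressing $V(p^r)$ as a $K$-form of $(\ZZ/p^r\ZZ)^g\times\mu_{p^r}^g$. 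In particular there is a canonical $\bar K$-isomorphism $\cA_g^{[\rho_{p^r}]}\otimes_K\bar K\cong\cA_g^{[p^r]}\otimes_K\bar K$ over $\cA_g\otimes_K\bar K$, and the resulting forgetful morphism $\pi_{\rho,p^r}\colon\cA_g^{[\rho_{p^r}]}\to\cA_g$ is finite and coincides with $\pi_{p^r}$ after base change to $\bar K$.

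Since being of general type is a geometric property in characteristic zero, the eventual hyperbolicity of Theorem~\ref{Th:eventual-hyperbolicity} carries over to the twisted setting: for any locally closed $K$-subvariety $X\subset\cA_g$ one has $\pi_{\rho,p^r}^{-1}(X)\otimes_K\bar K = \pi_{p^r}^{-1}(X\otimes_K\bar K)$, and hence every $K$-irreducible component of $\pi_{\rho,p^r}^{-1}(X)$ is of general type for $p^r>m_X$ (each such $K$-component being a $\mathrm{Gal}(\bar K/K)$-orbit of geometric components, all of general type by Theorem~\ref{Th:eventual-hyperbolicity}). Applying Lang's conjecture to these $K$-components yields the twisted analogue of Corollary~\ref{Cor:not-dense}: if $X(K)_{[\rho_{p^r}]}$ denotes the set of $K$-points of $X$ corresponding to abelian varieties $A/K$ admitting a level-$\rho_{p^r}$ structure, then $X(K)_{[\rho_{p^r}]}$ is not Zariski-dense in $X$ once $\dim X\geq 1$ and $r>r_X$.

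We conclude via the noetherian induction of \S\ref{Sec:proof}, applied verbatim with $\cA_g^{[\rho_{p^r}]}$ in place of $\cA_g^{[p^r]}$. Since the sets $Y(K)_{[\rho_{p^r}]}$ are decreasing in $r$, their Zariski closures form a nested chain of closed subvarieties of any given $Y\subset\cA_g$, which must stabilize by noetherianity; the non-density statement above forces the stable limit to be zero-dimensional. Applied to $Y=\cA_g$, this yields a single integer $r$ and a zero-dimensional $K$-subscheme of $\cA_g$ containing the modular point of every $A/K$ with level-$\rho_{p^r}$ structure, hence finitely many such isomorphism classes. The main obstacle is the construction of $\cA_g^{[\rho_{p^r}]}$ together with the verification that its geometry is obtained from that of $\cA_g^{[p^r]}$ by Galois twisting; this is a direct application of descent using the natural action of $\mathrm{Aut}(V(p^r))\cong GL_{2g}(\ZZ/p^r\ZZ)$ on the full level-$p^r$ structures parametrized by $\cA_g^{[p^r]}$.
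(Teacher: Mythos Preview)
Your proof is correct and follows the same approach as the paper: replace $\cA_g^{[p^i]}$ by its Galois twist $\cA_g^{[\rho_{p^i}]}$, observe that eventual hyperbolicity transfers because these spaces are geometrically isomorphic over $\cA_g$, and rerun the noetherian induction of \S\ref{Sec:proof}. You supply considerably more detail than the paper (which simply says ``mutatis mutandis''), and you correctly note the one substantive change---that the argument terminates with \emph{finitely many} abelian varieties rather than none, precisely because the zero-dimensional step Corollary~\ref{Cor:not-dense}\eqref{dim0} fails for general $\rho$.
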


	If $\rho$ is the Tate module of an abelian variety, then the finite set implied by Theorem~\ref{Th:main-twisted} is not empty.


	\subsubsection*{Other towers} 
	
	The statement about full level-$p^r$ structure is made for convenience. Our methods require only having a sequence of levels $\{m_i\}$ with $m_i<m_{i+1}$ and $m_i\mid m_{i+1}$. Let us call such a sequence of integers a {\em level tower}.

	\begin{theorem}[Uniform tower bound]
		\label{Th:tower}
		Assume that {\sc Lang}'s conjecture holds. Fix an integer $g$, a level tower $\{m_i\}$, and a number field $K$. Then there is an integer $r$ such that no principally polarized abelian variety $A/K$ of dimension $g$ has full level-$m_r$ structure.
	\end{theorem}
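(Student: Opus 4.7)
My plan is to mimic the noetherian induction that deduces Theorem~\ref{Th:main} from Corollary~\ref{Cor:not-dense} in \S\ref{Sec:proof}. The only property of the tower $\{p^r\}$ used there is the divisibility chain $p^r \mid p^{r+1}$, which the given tower $\{m_r\}$ shares by definition, so I expect the same argument to go through essentially verbatim.

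The first step is to record that Corollary~\ref{Cor:not-dense} is insensitive to the arithmetic of the level: for any $K$-defined locally closed subvariety $X \subset \cA_g$ and every $m > m_X$, the set $X(K)_{[m]}$ is either not Zariski dense in $X$ (conditional on Lang's conjecture when $\dim X \geq 2$, unconditional when $\dim X = 1$) or empty (when $\dim X = 0$). This is the only geometric input I would need.

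Next I would build a descending chain inside $\cA_g$. For each $r$, let $S_r \subset \cA_g(K)$ consist of the $K$-points corresponding to principally polarized abelian varieties of dimension $g$ over $K$ carrying a full level-$m_r$ structure, and set $Z_r := \overline{S_r}$. Because $m_r \mid m_{r+1}$, the surjection $A[m_{r+1}] \twoheadrightarrow A[m_r]$ sends a full level-$m_{r+1}$ structure to a full level-$m_r$ structure, so $S_{r+1} \subseteq S_r$ and hence $Z_{r+1} \subseteq Z_r$. Noetherianity of $\cA_g$ forces the chain to stabilize at some $Z_\infty$, which is automatically defined over $K$.

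The core step, and the only place where I would need to be careful, is to show $Z_\infty = \emptyset$. Decompose $Z_\infty = W_1 \cup \cdots \cup W_k$ into its $K$-irreducible components; each $W_i$ is a locally closed subvariety of $\cA_g$ defined over $K$. Applying Corollary~\ref{Cor:not-dense} to each $W_i$ yields an integer $m_{W_i}$ with the non-density/emptiness property above. I would then choose $r$ large enough that $m_r > \max_i m_{W_i}$: for such $r$ the intersection $S_r \cap W_i$ fails to be Zariski dense in $W_i$ for every $i$, and hence $Z_r = \overline{S_r} \subsetneq Z_\infty$, contradicting stabilization. I do not anticipate any serious obstacle here; the only subtlety is that the $W_i$ may be geometrically reducible, but Corollary~\ref{Cor:not-dense} requires only $K$-definability of a locally closed subvariety and handles this case uniformly.
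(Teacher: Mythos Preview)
Your proposal is correct and is precisely the paper's approach: the paper's proof of Theorem~\ref{Th:tower} consists of one sentence instructing the reader to rerun the noetherian-induction argument of \S\ref{Sec:proof} with $p^i$ replaced by $m_i$, which is exactly what you do. The only point to make explicit is that your ``large enough'' $r$ must also lie beyond the stabilization index $n$, so that $Z_r = Z_\infty$ and the strict inclusion $\overline{S_r} \subsetneq Z_\infty$ genuinely contradicts stabilization; this is implicit in your phrase ``contradicting stabilization'' but worth stating.
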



\section{Proof of the uniform power bound and its variants}


\subsection{Proof of Corollary \ref{Cor:not-dense}} \label{Sec:reduction}

	Without loss of generality, we may assume that $X$ is irreducible. Let $Y = \overline{\pi_m^{-1}(X)(K)}$. Eventual hyperbolicity~\ref{Th:eventual-hyperbolicity} implies that each irreducible component of $\pi_m^{-1}(X)$ is of general type. Thus, in the case $\dim X \geq 2$, {\sc Lang}'s conjecture~\ref{conj:Lang} implies that $Y$ does not contain any irreducible component of $\pi_m^{-1}(X)$, so $\pi_m(Y)$ is a proper closed subset of $X$. On the other hand, $X(K)_{[m]} \subseteq \pi_m(Y)(K)$. This proves~\eqref{dimgeq2}. The same argument proves~\eqref{dim1} after replacing {\sc Lang}'s conjecture with Falting's theorem. If $X$ is a point, then the torsion subgroup of an abelian variety $A/K$ corresponding to $X$ is finite, as can be seen from  the {\sc Mordell-Weil} Theorem, or by reducing $A$ modulo two different primes of good reduction.
	This proves~\eqref{dim0}.\qed


\subsection{Proof of Theorem \ref{Th:main}} \label{Sec:proof}

	For a positive interger $i$, set 
\[
W_i = \pi_{p^i}\left(\overline{\cA_g^{[p^i]}(K)}\right).
\]
Then $W_1 \supseteq W_2 \supseteq \cdots$ is a descending chain of closed subsets of $\cA_g$. This chain must stabilize because $\cA_g$ is a Noetherian topological space. Say $W_n = W_{n+1} = \cdots$. 

	We claim that $W_n$ has dimension $\leq 0$. Suppose not, and let $X \subseteq W_n$ be an irreducible component of positive-dimension. Then Corollary~\ref{Cor:not-dense} \eqref{dimgeq2} and~\eqref{dim1} implies that $X(K)_{[p^m]}$ is not Zariski dense in $X$ for $m > \max\{\log_p m_X, n\}$. Thus, the set $\overline{X(K)_{[p^m]}} \subseteq W_m$ is a proper closed subset of $X$.Ê On the
other hand, $W_m = W_n$, so $X$ is also an irreducible component of $W_m$, and hence
$\overline{X(K)_{[p^m]}} = X$, a contradiction.

	Finally, if $W_n$ is a finite set of points, then apply Corollary~\ref{Cor:not-dense} \eqref{dim0} to $W_n$ to conclude that $W_n(K)_{[p^m]} = \emptyset$ for all $p^m \gg 0$.\qed

\subsection{Proof of Theorem~\ref{Th:main-twisted}} Follow the proof of Theorem \ref{Th:main}, mutatis mutandis. To wit, replace $\cA_g^{[p^i]}$ with $\cA_g^{[\rho_{p^i}]}$, the moduli space parametrizing abelian varieties with level-$\rho_{p^i}$ structure. We arrive at a finite number of abelian varieties because Corollary~\ref{Cor:not-dense}\,\eqref{dim0} does not hold for arbitrary $\rho$.


\subsection{Proof of Theorem~\ref{Th:tower}} Follow the proof of Theorem \ref{Th:main}, mutatis mutandis. Explicitly: replace $p^i$ with $m_i$ in the definition of $W_i$, as well as $p^m$ with $m_j$, where $m_j > \max\{m_X,m_n\}$.\qed


\section{Logarithmic hyperbolicity}

	\begin{proposition}
		\label{Prop:hyp-m}
		Fix $m\geq 3$ and let $X_m \subset \cA_g^{[m]}$ be a closed subvariety, $X_m'\to X$ a resolution of singularities, $X_m' \subset \overline X_m'$ a smooth compactification with $\overline X_m' \smallsetminus X_m' =: D_m $ a normal crossings divisor.  Then $K_{\overline X_m'}+D_m$ is big.
	\end{proposition}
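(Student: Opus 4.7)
The plan is to invoke Zuo's theorem \cite[Theorem~0.1(ii)]{Zuo} applied to the natural polarized variation of Hodge structure on $X_m$. The hypothesis $m \geq 3$ is crucial: it ensures that $\cA_g^{[m]}$ is a fine moduli space and therefore carries a universal principally polarized abelian scheme $\cA \to \cA_g^{[m]}$. Its first relative Betti cohomology (twisted by the polarization) yields a polarized $\ZZ$-variation of Hodge structure $\mathbb{V}$ of weight one on $\cA_g^{[m]}$, whose associated period map is the tautological uniformization by the Siegel upper half space $\mathfrak{H}_g$ modulo the principal congruence subgroup $\Gamma(m) \subset \mathrm{Sp}_{2g}(\ZZ)$. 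Pulling $\mathbb{V}$ back along $X_m' \to X_m \hookrightarrow \cA_g^{[m]}$ gives a polarized $\ZZ$-VHS on $X_m'$.

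The key step is to check that the corresponding period map on $X_m'$ is generically finite onto its image. Consider the composition
\[
X_m' \longrightarrow X_m \hookrightarrow \cA_g^{[m]} \xrightarrow{\pi_m} \cA_g.
\]
The resolution $X_m' \to X_m$ is birational, the middle arrow is a closed immersion, and $\pi_m$ is finite by the discussion in the introduction. Hence the composition is generically quasi-finite onto its image. Since the period map $X_m' \to \mathfrak{H}_g/\Gamma(m)$ differs from this composition only by the finite étale covering $\mathfrak{H}_g/\Gamma(m) \to \mathfrak{H}_g/\mathrm{Sp}_{2g}(\ZZ) = \cA_g$, the pulled-back VHS on $X_m'$ indeed has generically finite period map.

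Now one applies Zuo's theorem: given a smooth projective variety $\overline Y$ with an NCD $D \subset \overline Y$ supporting a polarized $\ZZ$-VHS on $\overline Y \smallsetminus D$ whose period map is generically finite, $K_{\overline Y} + D$ is big. Taking $\overline Y = \overline X_m'$ and $D = D_m$ and using the VHS constructed above completes the proof.

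The main obstacle, if any, is verifying that the VHS coming from $\mathbb{V}$ satisfies the local hypotheses of Zuo's theorem at the boundary $D_m$—in particular that the local monodromies are quasi-unipotent and the Hodge bundles extend via Deligne's canonical extension as required. For variations coming from families of abelian varieties this is completely standard: after possibly refining $\overline X_m'$ by an equivariant blowup (preserving bigness), one can arrange unipotent local monodromies, and Schmid's nilpotent orbit theorem supplies the needed asymptotic control. Apart from this bookkeeping, the argument is a direct application of Zuo's result in the scheme setting, which is the content of this proposition.
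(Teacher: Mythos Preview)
Your proposal is correct and follows the same route as the paper: both invoke \cite[Theorem~0.1(ii)]{Zuo} for the polarized variation of Hodge structure on $X_m'$ coming from the universal abelian scheme, the point being that the period map is generically injective (hence generically finite) since $X_m'\to X_m$ is birational and $X_m\hookrightarrow \cA_g^{[m]}$ is an embedding into a fine moduli space for $m\geq 3$. The paper's proof is a terse version of yours; your additional remarks on the role of $m\geq 3$ and on the boundary behavior (quasi-unipotent monodromy, Deligne extension) are accurate bookkeeping that the paper simply leaves implicit.
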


	\begin{proof}
		This is a special case of~\cite[Theorem~0.1(ii)]{Zuo}. Quite generally, {\sc Zuo} shows that if $X$ is a smooth complex projective variety with a normal crossings divisor $D$, and if $X\smallsetminus D$ carries a polarized variation of Hodge structures whose corresponding period map is generically injective, then the pair $(X,D)$ is of logarithmic general type.  In our situation, $X'_m$ carries a polarized variation of Hodge structure, whose corresponding period map is generically injective, because the period map for $X_m$ is injective.
		
		\medskip
		
		We note that the result is also a special case of \cite[Theorem A(ii)]{Popa-Schnell}. 
	\end{proof}

	The moduli stack $\tcA_g$ has a morphism to its coarse moduli scheme $\cA_g$ \cite{Keel-Mori}. For any $m\geq 3$ the morphism $\cA_g^{[m]} \to \cA_g$ lifts to an {\em \'etale} morphism $\cA_g^{[m]} \to \tcA_g$: indeed given a morphism $S \to \tcA_g$ corresponding to an abelian scheme $A \to S$, one can identify $$\cA_g^{[m]}\times_{\cA_g} S = Isom_S(A[m], V/mV),$$  which is \'etale over $S$, since we are working in characteristic 0.

	\begin{proof}[Proof of Theorem \ref{Th:log-hyp}]
		Define $X_m := X \times_{\tcA_g} \cA_g^{[m]}$. This fibered product admits a resolution of singularities  $X'_m =  X' \times_{\tcA_g} \cA_g^{[m]}$, because $\cA_g^{[m]} \to \tcA_g$ is \'etale. Define  $\widetilde X_m'$ to be the normalization of $\overline X'$ in the total ring of functions of $X_m'$. These stacks fit into the diagram
	\[
		\xymatrix{
		X_m' \ar@{^(->}[r] \ar[d] & \widetilde X_m'\ar[d]^{\tilde\pi} \\ 
		X'  \ar@{^(->}[r] & \overline X'
		}.
	\]
Let $\widetilde D_m =   \widetilde X_m' \smallsetminus X_m'$.

		\begin{lemma}
			\label{lem:big}\ 
			\begin{enumerate}
				\item The embedding $X_m' \subset  \widetilde X_m'$ is a toroidal embedding. \label{item:toroidal}
				\medskip
				\item \label{item:divs} We have an equality of divisor classes 
					\[
						K_{\widetilde X_m'} + \widetilde D_m = \tilde\pi^*(K_{\overline X'}+D).
					\]
				\item The class $K_{\widetilde X_m'} + \widetilde D_m$ is big if and only if the class $K_{\overline X'}+D$ is big.\label{item:big}
			\end{enumerate}
		\end{lemma}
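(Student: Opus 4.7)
My plan is to leverage the étaleness of $\cA_g^{[m]} \to \tcA_g$ for $m \geq 3$, as recalled just before the lemma. Base-changing, $X_m' \to X'$ is étale, so the induced finite morphism $\tilde\pi \colon \widetilde X_m' \to \overline{X}'$ is étale over the complement of $D$, and any ramification is concentrated along components of the (reduced) preimage of $D$.

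For part (1), I would invoke Abhyankar's lemma. In étale-local coordinates $(x_1, \dots, x_n)$ on $\overline{X}'$ for which $D = \{x_1 \cdots x_k = 0\}$, purity of the branch locus together with Abhyankar's lemma express $\tilde\pi$ locally as a disjoint union of Kummer covers $y_i^{e_i} = x_i$ for $1 \le i \le k$ (and $y_i = x_i$ for $i > k$), with positive integers $e_i$. This exhibits $(\widetilde X_m', \widetilde D_m)$ as a toroidal embedding, since $\widetilde D_m$ is locally the strict normal crossings divisor $\{y_1 \cdots y_k = 0\}$. For part (2), working in the same Kummer model and writing $\widetilde D_{m,i} = \{y_i = 0\}$ for the reduced irreducible components of $\widetilde D_m$, the Jacobian of $\tilde\pi$ yields the ramification formula
\[
K_{\widetilde X_m'} \;=\; \tilde\pi^* K_{\overline{X}'} + \sum_{i=1}^k (e_i - 1)\, \widetilde D_{m,i}.
\]
Combined with $\tilde\pi^* D = \sum_i e_i\, \widetilde D_{m,i}$ and $\widetilde D_m = \sum_i \widetilde D_{m,i}$, this gives
\[
K_{\widetilde X_m'} + \widetilde D_m \;=\; \tilde\pi^* K_{\overline{X}'} + \sum_{i=1}^k e_i\, \widetilde D_{m,i} \;=\; \tilde\pi^*(K_{\overline{X}'} + D).
\]
For part (3), since $\tilde\pi$ is finite and surjective between proper Deligne--Mumford stacks, bigness of a $\QQ$-divisor class is both preserved and reflected by pullback along $\tilde\pi$; the equivalence therefore follows at once from (2).

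The main technical obstacle I anticipate is tracking the stacky structure, since $\overline{X}'$ is a proper Deligne--Mumford stack rather than a scheme. Both the Abhyankar-type local structure argument and the Jacobian computation transfer through an étale scheme atlas without modification, and the bigness comparison reduces to the classical statement via a finite cover of $\overline{X}'$ by a projective scheme. No new ideas are needed beyond these standard bookkeeping reductions.
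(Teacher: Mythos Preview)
Your overall strategy matches the paper's: Abhyankar's lemma for (1), a log-canonical compatibility for (2), and bigness under generically finite pullback for (3). But there is a genuine gap in your local analysis that affects both (1) and (2).

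Abhyankar's lemma does \emph{not} say that, \'etale-locally on $\overline{X}'$, the map $\tilde\pi$ is a disjoint union of simple Kummer covers $y_i^{e_i}=x_i$. What it gives (as in the paper's formulation, following SGA~1,~X.3.6) is that the completed local ring of $\widetilde X_m'$ at a point over $x$ is the ring of $H$-invariants of $\CC\llbracket t_1^{1/n},\ldots,t_k^{1/n},y_1,\ldots,y_l\rrbracket$ for some subgroup $H\subset\mu_n^k$ acting diagonally. A concrete obstruction to your description: take $\overline{X}'=\AA^2$ with $D=\{xy=0\}$ and the double cover of the complement given by $z^2=xy$; the normalization is $\{z^2=xy\}\subset\AA^3$, which is singular at the origin and is not, even \'etale-locally on the base, a product of one-variable Kummer covers. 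Thus $\widetilde X_m'$ has at worst toroidal (toric quotient) singularities, but need not be smooth.

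This does not harm the \emph{conclusion} of (1): a quotient of an affine toric chart by a finite subgroup of the torus is again toric, so $X_m'\subset\widetilde X_m'$ is toroidal. But your proof of (2) uses the Jacobian/Riemann--Hurwitz formula $K_{\widetilde X_m'}=\tilde\pi^*K_{\overline X'}+\sum(e_i-1)\widetilde D_{m,i}$, which presupposes that $\widetilde X_m'$ is smooth. The paper sidesteps this by arguing directly with logarithmic top forms: the generator $\frac{dt_1}{t_1}\wedge\cdots\wedge\frac{dt_k}{t_k}\wedge dy_1\wedge\cdots\wedge dy_l$ pulls back to a generator on the Kummer cover, and since pullback along $\widehat{(\widetilde X_m')}_{x_m}\to\widehat{(\overline X')}_x$ sits between this and the base, it generates there too. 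That argument survives the possible singularities of $\widetilde X_m'$. Your treatment of (3) is correct and agrees with the paper's.
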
 

		\begin{proof} 
			\eqref{item:toroidal} follows from {\sc Abhyankar}'s lemma. First, let $x\in \oX'$ be a point, and let $t_1\ldots,t_k$ be de defining equations of $D$ at $x$. Then the complete local ring at $x$ is of the form $\CC\llbracket t_1,\ldots,t_k, y_1,\ldots,y_l\rrbracket$, which is isomorphic to the completion of the coordinate ring of $\AA^k \times \GG_m^l$, a toric variety for the torus $\GG_n^k \times \GG_m^l$. Hence $X' \subset \oX'$ is a toroidal embedding. Let $X_m\in \tX'_m$ be a point over $x$. Since $X'_m \to X'$ is \'etale, {\sc Abhyankar}'s lemma \cite[Theorem X.3.6]{SGA1} says that there is an integer $n$ such that the completion $\hat\cO_{\tX'_m,x_m}$ is the ring of invariants of  the extension $\CC\llbracket \sqrt[n]{t_1},\ldots,\sqrt[n]{t_k}, y_1,\ldots,y_l\rrbracket$ by a subgroup $H$ of $\mu_n^k$ acting diagonally on the $t_i$. This is precisely the completion of $$\Spec \CC[ \sqrt[n]{t_1},\ldots,\sqrt[n]{t_k}, y_1,\ldots,y_l]/H,$$ an affine toric variety for the torus $ \GG_n^k \times \GG_m^l / H$, and therefore $X_m' \subset  \widetilde X_m'$ is a toroidal embedding as required. 
  
			For~\eqref{item:divs}, we continue with the notation above and note that the logarithmic differential $\frac{dt_1}{t_1}\wedge \cdots  \wedge\frac{dt_k}{t_k} \wedge dy_1 \wedge \cdots \wedge dy_l$ generates the stalk of logarithmic canonical differentials on the completion $\widehat{\oX'}_x$. Writing $u_i^m = t_i$, this differential pulls back to  $\frac{m\,du_1}{u_1}\wedge \cdots  \wedge\frac{m\,du_k}{u_k} \wedge dy_1 \wedge \cdots \wedge dy_l$, which generates the stalk on $\hat Y:=\Spf \CC\llbracket \sqrt[n]{t_1},\ldots,\sqrt[n]{t_k}, y_1,\ldots,y_l\rrbracket$. Since the pullback maps of stalks along $\hat Y \to \widehat{(\oX'_m)}_{x_m} \to \widehat{\oX'}_x$ are injective this element generates the stalk on the intermediate stage $\widehat{(\oX'_m)}_{x_m}$, as needed.
  
			Statement~\eqref{item:big} is a general fact about generically finite maps, see the argument of \cite[Theorem 1.5]{Debarre}: let $f\colon Y \to X$ be a dominant generically finite map  between reduced projective schemes of pure dimension $d$ and let $A$ be a divisor on $X$. Then $A$ is big if and only if $f^*A$ is big. As {\sc Robert Lazarsfeld} put it, this statement - in the form $\operatorname{vol}( f^* A) = \deg (f) \operatorname{vol}(A)$ - should have been in his books but isn't. To prove this it suffices to consider the case when $X$ is irreducible.   First, if $A$ is big then $h^0(X,\cO_X(mA)) = C m^d+ l.o.t$, and $$H^0(X,\cO_X(mA)) \subset H^0(Y',\cO_Y((f')^*(mA)))$$ for any irreducible component $Y'$ of $Y$, so  $$h^0(Y',\cO_{Y'}((f')^*(mA)))\geq  C m^d+ l.o.t$$ as needed. 

			In the other direction we may assume that $Y$ is also irreducible and write $\deg f=r$. If  $f^*A$ is big then  $$h^0(Y,\cO_{Y'}(f^*(mA)))=  C' m^d+ l.o.t.$$ But $$H^0(Y,\cO_{Y}(f^*(mA))) = H^0(X, f_*\cO_Y\otimes \cO_X(mA))$$ so $$h^0(X, f_*\cO_Y\otimes \cO_X(mA)) = C' m^d+ l.o.t.$$ Choosing an ample sheaf $\cO_X(1)$, there is an exact sequence  $$0 \to \cO_X(-n)^r \to f_*\cO_Y \to T\to 0$$ for some sheaf $T$ supported in dimension $\leq d-1$. Since 
			\[
				h^0(X, T \otimes \cO_X(mA)) \leq O(m^{d-1})
			\]
it follows that 
			\[
				h^0(X, \cO_X(-n)^r \otimes \cO_X(mA))\geq  C' m^d+ l.o.t,
			\]
hence
			\[
				h^0(X, \cO_X(mA))\geq h^0(X, \cO_X(-n)\otimes \cO_X(mA))\geq  \frac{C'}{r} m^d+ l.o.t
			\]
as needed.
		\end{proof}
		
		To complete the proof of Theorem \ref{Th:log-hyp},  let $\phi\colon \overline X_m'\to\widetilde X_m'$ be a toroidal resolution of singularities: it exists by \cite[Theorem 11*, p. 94]{KKMS}. Let $D_m = \overline X_m' \smallsetminus X_m'$, which is a normal crossings divisor. We have $\phi^*(K_{\widetilde X_m'} + \widetilde D_m) = K_{\overline X'_m}+  D_m$ \cite[p. 268]{Mumford}. We note that $\phi_* \cO_{\overline X_m'} = \cO_{\widetilde X_m'}$ since $\widetilde X_m'$ is normal. Therefore for every integer $n$ we have an equality 
		\[
			H^0( \overline X_m', \cO_{\overline X_m'}(n\cdot (K_{\overline X'_m}+  D_m))) = H^0( \widetilde X_m', \cO_{\widetilde X_m'}(n\cdot (K_{\widetilde X'_m}+  \widetilde D_m))).
		\]  
In particular $K_{\widetilde X_m'} + \widetilde D_m$ is big if and only if $K_{\overline X'_m}+  D_m$ is big. By Proposition \ref{Prop:hyp-m}, $K_{\overline X'_m}+  D_m$ is big. It follows that $K_{\widetilde X_m'} + \widetilde D_m$ is big, and by Lemma~\ref{lem:big}\eqref{item:big}, the class $K_{\overline X'}+D$ is big, as required.
	\end{proof}


\section{Ramification and eventual hyperbolicity}

	\begin{proposition}[{\cite[Proof of Theorem 3.1]{Nadel}}] 
		\label{Prop:total-ram}
		Let $x$  be a complex point  of $\otcA_g$ with  complete local ring $\hat O_x= \CC\llbracket t_1,\ldots t_k,y_1,\ldots y_n\rrbracket$, with $t_i$ the defining equations of boundary components and $y_i$ coordinates along the stratum of $x$. Let $\tilde x$ be a point of $\ocA_g^{[m]}$ above $x$. Then $\hat O_{\tilde x}= \CC\llbracket u_1,\ldots u_k,y_1,\ldots y_n\rrbracket$ with $u_i^m = t_i$.
	\end{proposition}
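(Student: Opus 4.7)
The plan is to follow the strategy of \cite[Proof of Theorem 3.1]{Nadel}, using the explicit toroidal description of the boundary of $\otcA_g$ from \cite[Chapter~IV]{Faltings-Chai}. The guiding principle is that full level-$m$ structure geometrically trivializes the monodromy of the universal semi-abelian scheme on $m$-torsion, and this trivialization is what forces the $m$-th root cover along each boundary divisor.

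First, I would recall the étale-local model of $\otcA_g$ at $x$. By the Faltings--Chai construction, after choosing a rational polyhedral cone $\sigma$ sitting in the space of positive semi-definite symmetric bilinear forms on some lattice $\ZZ^r$ (with $r \le g$ the torus rank of the degeneration at $x$), the completion $\hat O_x$ is isomorphic to $\CC\llbracket S_\sigma\rrbracket^{\wedge} \,\hat\otimes_\CC\, \CC\llbracket y_1,\ldots,y_n\rrbracket$, where $S_\sigma$ is the dual monoid. The boundary equations $t_1,\ldots,t_k$ correspond to the primitive generators of the rays of $\sigma$, and the $y_1,\ldots,y_n$ are coordinates along the stratum of $\otcA_g$ through $x$.

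Next, I would identify the cover $\otcA_g^{[m]} \to \otcA_g$ in these coordinates. The universal semi-abelian scheme degenerates along $D$ with unipotent monodromy, and the logarithm of the monodromy around $\{t_i = 0\}$ is the generator of the $i$-th ray of $\sigma$. A full level-$m$ structure makes this monodromy trivial modulo $m$, which at the combinatorial level amounts to refining the character lattice by $1/m$. The resulting toric cover is precisely the one whose completed local ring adjoins $u_i$ with $u_i^m = t_i$. The $y_j$-directions remain untouched because the full level-$m$ structure on the closed stratum is an étale problem in those variables, and étale morphisms induce isomorphisms on completed local rings.

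The main obstacle is justifying rigorously that the algebraic operation of adjoining a level-$m$ structure on the universal family matches the toric $1/m$-refinement. This is exactly where Mumford's construction of degenerating abelian varieties, as developed in \cite[Chapter~III]{Faltings-Chai}, enters: over $\Spf \CC\llbracket S_\sigma\rrbracket^{\wedge}$ the universal object is the quotient of a semi-abelian scheme by a period lattice $Y \subset X^\vee$, and a full level-$m$ structure on this quotient corresponds to replacing $Y$ by $\tfrac{1}{m}Y$, whose effect on the toroidal chart is precisely the extraction of $m$-th roots $u_i = t_i^{1/m}$. Once this identification is in hand, comparing complete local rings along the cover yields the claimed formula for $\hat O_{\tilde x}$.
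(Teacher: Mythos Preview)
Your approach is correct in outline but takes a different route from the paper's. The paper does not work at a general boundary point with torus rank $r \le g$; instead it reduces immediately to the case where $x$ lies in the deepest stratum over a $0$-cusp, so that $k = g(g+1)/2$ and there are no $y$-coordinates at all. There the computation becomes purely group-theoretic in the Siegel upper half-space model, following \cite{Mumford}: one has $U(F) \cap \Gamma \cong \ZZ^{g(g+1)/2}$ (symmetric integer matrices acting by translation), while $U(F) \cap \Gamma(m) = m \cdot (U(F) \cap \Gamma)$, so the map of toric charts $\sD(F)/(U(F)\cap\Gamma(m)) \to \sD(F)/(U(F)\cap\Gamma)$ is visibly $u_{ij} \mapsto u_{ij}^m$.

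Your route via the Faltings--Chai degeneration data and the period-lattice replacement $Y \mapsto \tfrac{1}{m}Y$ is more intrinsic and handles all boundary points uniformly, but it carries more overhead: you must justify that the abelian-part and extension-part contributions to the level-$m$ structure affect only the $y$-coordinates (you sketch this via \'etaleness), and that the monodromy-triviality-mod-$m$ criterion really coincides with the $1/m$-refinement of the toric lattice. The paper sidesteps this machinery by collapsing everything to a single explicit lattice calculation at the $0$-cusp; what it loses is the transparent separation of the $t$- and $y$-directions that your argument provides.
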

We provide a proof, similar to Nadel's.

	\begin{proof}
		We follow \cite{Mumford}, especially the notation on pages 254-255 and argument on pages 271-272. Let $\sD$ be the space of complex $g\times g$ symmetric matrices with positive definite imaginary part. If $F\subset  \overline \sD \smallsetminus \sD$ is a rational boundary component, then its closure contains a 0-dimensional boundary component (a 0-cusp). Thus  we may as well assume that $x$ corresponds to a 0-cusp, and moreover $x$ is itself a 0-dimensional stratum of the smooth toroidal compactification; in particular in the notation above $k=g(g+1)/2$ and $n=0$. Since all rational 0-cusps lie in the same $GL_g(\ZZ)$ orbit, we may as well assume that the cusp is the point $(i\infty) I_g$ in $\overline \sD$.

		In this situation in the notation of \cite[p. 254]{Mumford}, we have
		\begin{align*}
			N(F) &= \left\{ \left(\begin{matrix} A & B \\ 0 & D\end{matrix} \right)\in  Sp_{2g}(\RR)\right\} \\
			W(F) = U(F) &= \left\{ \left(\begin{matrix} I_g & B \\ 0 & I_g\end{matrix} \right) \bigg|  B = B^T\right\} 
		\end{align*} 
In particular the integer $l$ occuring there is 0.

		In this case, in the notation of \cite[p. 255]{Mumford}, $\sD_F = \sD(F)$ is the space $U(F)_\CC$ of all symmetric $g\times g$ complex matrices, and $U(F)$ acts by translation by the real symmetric matrices  $B$. We have $\Gamma = Sp_{2g}(\ZZ)$, and $U(F) \cap \Gamma$ is the additive group of symmetric $g\times g$ integer matrices $B$. Let $\Gamma(m) = \ker(Sp_{2g}(\ZZ) \to Sp_{2g}(\ZZ/m\ZZ))$, and note that $U(F) \cap \Gamma(m)$ is the additive group of symmetric $g\times g$ integer matrices divisible by $m$.

		Finally, in the notation of \cite[p. 272]{Mumford}, we have $$\sD(F) / U(F)\cap \Gamma = \CC^{g(g+1)/2} / \ZZ^{g(g+1)/2} = (\CC^*)^{g(g+1)/2}.$$ The chart of the smooth toroidal compactification of $\cA_g$ is  $$(\CC^*)^{g(g+1)/2} \subset \CC^{g(g+1)/2}.$$ 

		Similarly $\sD(F) / U(F)\cap \Gamma(m) = \CC^{g(g+1)/2} / (m\ZZ)^{g(g+1)/2} = (\CC^*)^{g(g+1)/2}$, with similar chart in the smooth toroidal compactification of $\cA_g^{[m]}$. The group $\Gamma/\Gamma(m)\simeq (\ZZ/m\ZZ)^{g(g+1)/2}$ acts by $m$-th roots of unity on each variable.  
		\[
			(a_{i,j})\cdot  (u_{i,j}) = (\zeta_m^{a_{i,j}}\, u_{i,j}).
		\]
It follows  the quotient map of smooth toroidal compactifications
		\[
			\overline{\sD(F) / U(F)\cap \Gamma(m)} \to \overline{\sD(F) / U(F)\cap \Gamma}
		\]
raises each variable to the  $m$-th power, as required. 
	\end{proof}

\begin{definition} \label{Def:m-highly-ramified}
Let $\pi_m\colon \ocA^{[m]} \to \ocA$ be a finite map between smooth  Deligne--Mumford stacks of finite type over $\CC$. Fix a normal crossings divisor  $E \subset \ocA$.  The map $\pi_m$ is said to be $m$-highly ramified over $E$ at a complex point $x \in \ocA$ if for every irreducible component of $E$ with local parameter $t$ at $x$ and every $\tilde x \in \ocA^{[m]}$ above $x$ we have $t = \nu u^m$ for some $u,\nu \in \hat O_{\tilde x}$, with $\nu$ a unit.
\end{definition}

\begin{proposition}\label{Prop:lem:c}
Let $\pi_m\colon  \ocA^{[m]} \to \ocA$ be a finite and $m$-highly ramified map between smooth Deligne--Mumford stacks of finite type over $\CC$, with respect to a normal crossings divisor  $E \subset \ocA$.  Let $X \subset \ocA \smallsetminus E$ be a closed subvariety, with $X' \to X$ a desingularization, and $\oX'$ a smooth compactification such that $D := \oX' \smallsetminus X'$ is a normal crossings divisor, and such that the rational map $\oX' \dasharrow \ocA$ extends to a morphism of pairs $(\oX',D) \to (\ocA,E)$.  Let $\oX'_m \to \oX\times_\ocA\ocA_m$ be a resolution of singularities with projection $\pi_m^X\colon \oX'_m \to \oX'$.  There is a constant $c > 0$, depending only on $\oX'$, such that
		\[
			\left(\pi_m^X\right)^*D \geq cm D_m,
		\]
where $D_m \subset \oX'_m$ is the complement of $X'_m$.
\end{proposition}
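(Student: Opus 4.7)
The plan is to perform a formal local computation at each point $\tilde x'\in\oX'_m$, comparing two expressions for the pullback of a local defining equation of $E$: through $\oX'_m\to\oX'\to\ocA$ and through $\oX'_m\to\ocA^{[m]}\to\ocA$. Let $x'=\pi_m^X(\tilde x')\in\oX'$, let $\tilde x\in\ocA^{[m]}$ be the image of $\tilde x'$, and set $x=\pi_m(\tilde x)\in\ocA$. Pick local parameters $t_j$ at $x$ whose product cuts out $E$, local parameters $s_i$ at $x'$ whose product cuts out $D$, and local parameters $v_\beta$ at $\tilde x'$ whose product cuts out $D_m$. The hypothesis that $(\oX',D)\to(\ocA,E)$ is a morphism of pairs gives a global pullback formula $\pi^* E_J=\sum_I a_{IJ}D_I$ with $a_{IJ}\ge 0$; locally this reads $\pi^* t_j=u_j\prod_i s_i^{a_{ij}}$ for a unit $u_j$. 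Because $\oX'$ is of finite type and $D$ has finitely many components, the integer
\[
  M\ :=\ \max\{\,a_{IJ}\,:\,a_{IJ}>0\,\}
\]
is finite and depends only on $\oX'$ (and its given morphism to $\ocA$). I will show $c=1/M$ works.

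Next, the $m$-high ramification hypothesis gives $t_j=\nu_j\tau_j^m$ in $\hat O_{\tilde x}$ for some unit $\nu_j$. Since $D_m=(\pi_m^X)^{-1}(D)\subset\pi_m^{-1}(E)$, the pullbacks of $s_i$ and of $\tau_j$ to $\hat O_{\tilde x'}$ vanish only along components of $D_m$, so they admit factorizations
\[
  (\pi_m^X)^*s_i\ =\ w_i\prod_\beta v_\beta^{c_{i\beta}},\qquad
  \tau_j\ =\ \mu_j\prod_\beta v_\beta^{d_{j\beta}}
\]
with $w_i,\mu_j$ units and $c_{i\beta},d_{j\beta}\ge 0$ integers. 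Equating the two expressions for the pullback of $t_j$ to $\hat O_{\tilde x'}$ and comparing orders of vanishing along $D_{m,\beta}$ yields the key divisorial identity
\[
  \sum_i a_{ij}\,c_{i\beta}\ =\ m\,d_{j\beta}\qquad\text{for every $j,\beta$.}
\]
Since locally $(\pi_m^X)^*D=\sum_\beta\bigl(\sum_i c_{i\beta}\bigr)D_{m,\beta}$, the desired bound $(\pi_m^X)^*D\ge(m/M)D_m$ reduces to proving $\sum_i c_{i\beta}\ge m/M$ at every $\beta$.

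To produce this lower bound I would observe that $\pi_m(\pi_m^X(D_{m,\beta}))$ is irreducible and contained in $E$, hence lies in some component $E_{j^*}$. Consequently $\pi_m^X(D_{m,\beta})\subset \pi^{-1}(E_{j^*})$; as this preimage is the union of those $D_I$ with $a_{I,j^*}>0$ and $\pi_m^X(D_{m,\beta})$ is irreducible, it is contained in some $D_{i^*}$ with $a_{i^*j^*}\ge 1$. This forces $c_{i^*\beta}\ge 1$. Substituting $j=j^*$ into the key identity yields $m\,d_{j^*\beta}\ge a_{i^*j^*}c_{i^*\beta}\ge 1$, so by integrality $d_{j^*\beta}\ge 1$, whence
\[
  m\ \le\ m\,d_{j^*\beta}\ =\ \sum_i a_{ij^*}c_{i\beta}\ \le\ M\sum_i c_{i\beta},
\]
i.e.\ $\sum_i c_{i\beta}\ge m/M$, as required.

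The delicate point I expect will be the choice of $j^*$. A naive attempt --- pick any $i\in S_\beta:=\{i:c_{i\beta}>0\}$ and then find $j$ with $a_{ij}>0$ --- can fail: components of $D$ need not map into $E$, because a given $D_i$ may contract to a subvariety of $X\subset\ocA\smallsetminus E$, in which case $a_{ij}=0$ for all $j$. The fix is to use the image of $D_{m,\beta}$ itself (which \emph{does} land in $E$) to select both $j^*$ and $i^*$ simultaneously, as above. Once this is in place, the argument is entirely arithmetic and the finiteness of $M$ (and hence the existence of a uniform $c=1/M$ depending only on $\oX'$) is immediate.
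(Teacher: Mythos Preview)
Your argument is correct and follows the same route as the paper's: compare the two pullbacks of a local equation of $E$ to $\oX'_m$ (through $\oX'$ and through $\ocA^{[m]}$) and extract an integrality constraint. The paper phrases this via discrete valuations rather than local coordinates, and it multiplies over all $j$ (working with $a_i=\sum_j a_{ij}$ and an $\lcm$ trick) instead of isolating a single $j^*$ as you do; your constant $c=1/\max_{i,j}a_{ij}$ is in fact slightly sharper than the paper's $c=1/\max_i a_i$.

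One comment on your ``delicate point'': it is not an issue, and your proposed fix is circular. In this setup one always has $D=f^{-1}(E)$ as sets, so every $D_i$ maps into $E$ and some $a_{ij}>0$ for each $i$; the paper simply asserts this. Here is why. Let $U=f^{-1}(\ocA\smallsetminus E)\supseteq X'$. Since $\oX'$ is proper and $X$ is closed in $\ocA\smallsetminus E$, the map $U\to X$ is proper; but $X'\to X$ is also proper (a desingularization), so the open immersion $X'\hookrightarrow U$ is proper, hence closed, hence an isomorphism by irreducibility. Thus $D=\oX'\smallsetminus X'=f^{-1}(E)$. Your fix---that $D_{m,\beta}$ ``does land in $E$''---uses the same fact, since $\pi_m^X(D_{m,\beta})\subset D$ and you then need $f(D)\subset E$. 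So the naive approach you describe already works.
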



	\begin{proof}
		We give a lower bound for the multiplicity of $\left(\pi_m^X\right)^*D$ along each irreducible divisor in $\oX'_m$ lying above $D$. To avoid explicitly computing $\oX'_m$, we shall work with valuations. Since such a divisor gives a discrete valuation on the function field $\kappa(\oX'_m)$, and hence a discrete valuation on $\kappa(\oX')$, we begin by fixing a discrete valuation ring $R$ in $\kappa(\oX')$, and we let $R_m$ be the integral closure of $R$ in $\kappa(\oX'_m)$. Let $z$ and $w$ be respective uniformizing parameters for $R$ and $R_m$, so that $z = \mu w^r$ in $R_m$ for some unit $\mu$.

		Let $D_i$ be irreducible divisors such that $D = \sum_{i=1}^\ell D_i$. Let 
		\[
			f\colon (\overline{X}',D) \to (\ocA,E)
		\]
be the map of pairs arising from the set-up, where $E = \sum_{j = 1}^k E_j$, for some irreducible divisors $E_j \subset \ocA$. 

		Let $x \in \oX'$ be a complex point with image $f(x) \in E$, and let $x_m \in \oX'_m$ be a point with $p_1(x_m) = x$. Write $s_i$ for a local equation of $D_i$ near $x$, and $t_i$ for a defining equation of $E_i$ near $f(x)$.  Write $f_m\colon \oX'_m \to \ocA^{[m]}$ for the second projection. Our assumption that $\ocA^{[m]} \to \ocA$ is $m$-highly ramified implies that near $\tilde x := f_m(x_m)$ we have $t_i = \nu_i u_i^m$ for units $\nu_i$, and thus 
		\begin{equation}
			\label{eq:mth-power}
			t_1\cdots t_k = \nu(u_1\cdots u_k)^m
		\end{equation}
is an $m$-th power up to a unit $\nu$.
 
For $j = 1,\dots,k$ define collections of integers $\{a_{ij}\}_{i = 1}^\ell$ so that
		\begin{equation}
		\label{eq:aijs}
			f^*E_j = \sum_{i=1}^\ell a_{ij} D_i
		\end{equation}
Note that $a_{ij} \geq 0$, and that for a fixed $i$, at least one $a_{ij}$ is strictly positive. This way,
		\begin{equation}
		\label{eq:ais}
			f^*E = \sum_{i = 1}^\ell a_iD_i,\quad\textup{where }a_i := \sum_{j = 1}^k a_{ij},\textup{ and }a_i > 0.
		\end{equation}
By~\eqref{eq:aijs} we know that
		\[
			t_j = \nu' s_1^{a_{1j}}\cdots s_\ell^{a_{\ell j}}\quad\textup{for }j = 1,\dots,k.
		\]
and thus combining~\eqref{eq:mth-power} and~\eqref{eq:ais} we obtain
		\[
			(u_1\cdots u_k)^m = \nu'' s_1^{a_1}\cdots s_\ell^{a_\ell}.
		\]
		On the other hand, for $i = 1,\dots,\ell$ we have equalities in $R$ of the form
		\[
			s_i = \mu_i z^{b_i},
		\]
where $\mu_i$ is a unit. Hence there is a unit $\mu' \in R$ such that
		\[
			(u_1\cdots u_k)^m = \mu'\cdot z^{\sum a_ib_i}.
		\]
In turn, we get an equality in $R_m$ of the form
		\[
			(u_1\cdots u_k)^m = \mu''\cdot w^{\left(r\sum a_ib_i\right)}
		\]
for some unit $\mu'' \in R_m$. We infer that $m \mid r\cdot(\sum a_ib_i)$, and so
		\[
			r\geq \frac{m}{\gcd\left( m, \sum a_ib_i\right)}.
		\]
(Note that $\gcd\left( m, \sum a_ib_i\right) > 0$ since $a_i > 0$ for all $i$, and at least one $b_i$ is positive since we are looking at irreducible divisors lying over $D$, hence divisors whose image must be contained in at least one $D_i$.)

		Next, $D$ is locally defined by $s_1\cdots s_\ell$, so
		\[
			s_1\cdots s_\ell = \mu''' w^{r\sum b_i},
		\]
and the order of $\left(\pi_m^X\right)^*D$ on $R_m$ is thus
		\[
			r\cdot\sum b_i \ \geq\  \frac{m}{\gcd\left( m, \sum a_ib_i\right)}\cdot\sum b_i
		\]
Let 
		\[
			c = \frac{1}{\max_j \{a_j\}},
		\]
so that $a_i\cdot c \leq 1$ for $i = 1,\dots,\ell$. Then
		\[
			c\cdot \sum a_i b_i \leq \sum b_i,
		\]
allowing us to conclude that
		\begin{align*}
			r\cdot\sum b_i\ &\geq\ \frac{m}{\gcd\left( m, \sum a_ib_i\right)}\cdot c \cdot\sum a_ib_i \\
			&=\ c\cdot \lcm\left(m,\sum a_ib_i \right)\geq c\cdot m.
		\end{align*}
\end{proof}

	\begin{proof}[Proof of Theorems~\ref{Prop:GRH} and \ref{Th:principle}] By Proposition \ref{Prop:total-ram} we have $\otcA_g^{[m]} \to \otcA_g$ is $m$-highly ramified in Theorem ~\ref{Prop:GRH}. 
		The morphism $\pi_m^X\colon \oX'_m \to \oX'$ is a generically finite morphism of smooth varieties \'etale away from a normal crossings divisor $D_m$ on $\oX'_m$ and $D$ on $\oX'$. Hence, by~\cite[Theorem~11.5]{Iitaka}, we have
		\[
			\left(\pi_m^X\right)^*(K_{\oX'} + D) \leq K_{\oX'_m} + D_m.
		\]
It follows that
		\[
			\left(\pi_m^X\right)^*\left(K_{\oX'} + (1-\epsilon)D\right)
\leq K_{\oX'_m} + D_m - \epsilon \left(\pi_m^X\right)^*D
		\]
Let $c$ be the constant furnished by Proposition~\ref{Prop:lem:c}, and pick $m_X$ so that $\epsilon > 1/cm_X$. Then for each $m > m_X$ we obtain
		\[
			\epsilon \left(\pi_m^X\right)^*D > \frac{1}{cm}\left(\pi_m^X\right)^*D > D_m.
		\]
We conclude that for such $m$
		\[
			\left(\pi_m^X\right)^*(K_{\oX'} + (1 - \epsilon)D) < K_{\oX'_m} + D_m - D_m = K_{\oX'_m},
		\]
as desired.
	\end{proof}
	
	\begin{proof}[Proof of Theorem~\ref{Th:eventual-hyperbolicity}]
		The theorem follows easily from Corollary~\ref{Cor:epsilon} and Theorem~\ref{Prop:GRH}.
	\end{proof}

\vspace{.2in}

\hspace{2.6in}
\begin{minipage}{3in}
\emph{``And so it went on until the morning,}

\emph{when all the straw was spun, }

\emph{and all the reels were full of gold.''}
\end{minipage}

\vspace{.2in}

\rightline{Jacob and Wilhelm Grimm}
\rightline{Rumpelstiltskin, in \emph{Children's and Household Tales}}

\bibliographystyle{plain}             
\bibliography{levels} 

\end{document}